
\documentclass[preprint]{elsarticle}

\usepackage{amsfonts,amssymb,amsmath}

\newenvironment{proof}{\medskip                    
\noindent{\scshape Proof:}}{\quad $\square$
\medskip}  

\usepackage{hyperref}

\newtheorem{theorem}{Theorem}[section]
\newtheorem{lemma}[theorem]{Lemma}
\newtheorem{proposition}[theorem]{Proposition}
\newtheorem{corollary}[theorem]{Corollary}

\newtheorem{remark}[theorem]{Remark}

\newtheorem{definition}[theorem]{Definition}


\newfont{\bb}{msbm10}

\def\diag{ {\rm diag}}

\def\b1{{\bf 1}}

\newcommand{\Aux}{\operatorname{Aux}}

\newcommand {\beq}{\begin{equation}}
\newcommand {\eeq} {\end{equation}}
%

\newcommand {\cC} {{\cal C}}

\newcommand {\cG} {{\cal G}}

\newcommand {\cS} {{\cal S}}

\newcommand{\digr}{\cG}
\newcommand{\crit}{\cC}
\newcommand{\cycles}{C}

%


%

%

%

%

%

%
%
\newcommand {\R} {{\mathbb R}}

\newcommand {\Rp} {\R_+}
\newcommand {\Rpn} {\R_+^n}
\newcommand {\Rpnn} {\Rp^{n\times n}}

%

%

%

%
\newcommand{\comp}{\operatorname{comp}}

%


\def\C{{\rm C\kern-.48em\vrule width.06em height.6em depth-.02em
                  \kern.48em}}

\def\bce{\begin{center}}
\def\ece{\end{center}}

\begin{document}

\title{On sets of eigenvalues of matrices with prescribed row sums
and prescribed graph } 

\author[rvt1]{Gernot Michael Engel\corref{cor}}
\ead{engel@transversalnetworks.net}

\author[rvt2]{Hans Schneider}
\ead{hans@math.wisc.edu}

\author[rvt3]{Serge{\u\i} Sergeev\fnref{fn1}}
\ead{sergiej@gmail.com}

\address[rvt1]{Transversal Networks Corp., 2753 Mashall Parkway, Madison, WI 53713,USA}
\address[rvt2]{Deaprtment of Mathematics, University of Wisconsin-Madison, Madison, WI 53706, USA}
\address[rvt3]{University of Birmingham, School of Mathematics, Edgbaston B15 2TT, UK}

\cortext[cor]{Corresponding author. Email: engel@transversalnetworks.net}

\fntext[fn1]{Supported by EPSRC grant EP/J00829X/1}

\begin{abstract}
Motivated by a work of Boros, Brualdi, Crama and Hoffman, we consider the sets of 
(i) possible Perron roots of nonnegative matrices with prescribed
row sums and associated graph, and (ii) possible eigenvalues of complex matrices with 
prescribed associated graph and
row sums of the moduli of their entries.
To characterize the set of Perron roots or possible eigenvalues of matrices in these 
classes we introduce, following an idea of Al'pin,
Elsner and van den Driessche, the concept of row uniform matrix, which is a nonnegative matrix where all nonzero
entries in every row are equal.
Furthermore, we 
completely characterize the sets of possible Perron roots of the class of nonnegative matrices 
and the set of possible eigenvalues of the class of complex matrices under study.
Extending known results
to the reducible case, we derive new sharp bounds on the set of 
eigenvalues or Perron roots of matrices when the only information 
available is the graph of the matrix and the row sums of the moduli of its entries. In the last section of the paper
a new constructive proof of the Camion-Hoffman theorem is given. 
\end{abstract}

\begin{keyword}
 Ger{\v{s}}gorin, eigenvalues, Perron root, row sums, row uniform matrices, graphs, diagonal similarity,  sum scaling, Camion-Hoffman, 
\vskip0.1cm {\it{AMS Classification:}} 15A18, 15A29, 15A80
\end{keyword}

\maketitle




\section{Introduction}

\subsection{Background and motivation}

The use of the row sums of a  matrix to determine 
nonsingularity or to bound 
its spectrum has its origins in the 19th century \cite[Section 2] {S} and has led 
to a vast literature associated with the name of Ger\v{s}gorin and his circles \cite{V}. One of the first observations,
due to
\if{
The main idea behind the present paper is  by simple expressions involving
row sums of that matrix. This idea, closely related to Ger\v{s}gorin circles was known to
} \fi 
Frobenius, was that the Perron root $\rho(A)$ (i.e., the biggest nonnegative 
eigenvalue, or the spectral radius) of a nonnegative matrix $A\in\Rpnn$ is bounded by 
\begin{equation}
\label{e:rhoA}
\min\limits_{i=1}^n r_i(A)\leq\rho(A)\leq\max_{i=1}^n r_i(A)
\end{equation}
where $r_i$ denotes the $i$th row sum of the elements of $A$. If $A$ is irreducible
then the inequalities in~\eqref{e:rhoA} are strict 
except when $\min_{i=1}^n r_i(A)=\max_{i=1}^n r_i(A)$. 

In a recent development, Al'pin~\cite{A},
Elsner and van den Driessche~\cite{EvdD} sharpened the classical bounds of Frobenius
by considering a matrix $B$ which has the same zero-nonzero pattern
as $A$, 
and whose entries are equal to the row sums of $A$ in the corresponding rows. 
We formalize this idea in the following definition.


\begin{definition} 
\label{def:aux}
For  $A \in \Rpnn$ we define the {\em auxiliary matrix} $B = \Aux(A)$ defined by 
\begin{equation}
\label{auxmat}
\begin{cases}
b_{ij} = \sum_k a_{ik}, &  \text{if   $a_{ij} \neq 0$},\\
b_{ij} = 0,             & \text{if    $a_{ij} = 0$}.
\end{cases}
\end{equation}
For a general complex matrix $A\in\C^{n\times n}$, its auxiliary matrix is defined as
$\Aux(|A|)$.
\end{definition}

Next, recall the concepts of minimal and maximal cycle (geometric) means. For an arbitrary 
matrix $A\in\Rpnn$ these quantities are defined as follows 

\begin{equation}
\label{e:numu}
\begin{split}
\nu(A)&= \min\limits_{(i_1,\ldots, i_{\ell})\in \cycles(A)}
(a_{i_1i_2}\cdot a_{i_2i_3}\cdot\ldots\cdot a_{i_{\ell}i_1})^{1/\ell},\\
\mu(A)&= \max\limits_{(i_1,\ldots, i_{\ell})\in \cycles(A)}
(a_{i_1i_2}\cdot a_{i_2i_3}\cdot\ldots\cdot a_{i_{\ell}i_1})^{1/\ell},
\end{split}
\end{equation}
where $\cycles(A)$ denotes the set of cycles of the associated graph. Recall that the directed weighted graph, associated with an arbitrary complex matrix $A\in\C^{nn}$, is 
defined by the set of nodes $N=\{1,\ldots,n\}$ and set of edges $E$ such that $(i,j)\in E$ if and only if
$a_{ij}\neq 0$, in which case edge $(i,j)$ is assigned the weight $a_{ij}$. 

According to Al'pin~\cite{A}, Elsner and van den Driessche~\cite{EvdD}, we have
\begin{equation}
\label{e:nurhomu}
\nu(B)\leq\rho(A)\leq\mu(B),\qquad B=\Aux(A),
\end{equation}
for any nonnegative matrix $A$.
If $A$ and hence $B$ are irreducible then either $\nu(B)=\rho(A)=\mu(B)$ or (if $\nu(B)<\mu(B)$) the inequalities 
in~\eqref{e:nurhomu} are strict. 

Exploiting similar ideas, Boros, Brualdi, Crama and Hoffman~\cite{BBCH} investigated a 
class of complex matrices $A\in\C^{n\times n}$ with 
prescribed off-diagonal row sums of the moduli of their entries, prescribed associated graph, and prescribed
moduli of all diagonal entries. In the case when $\digr(A)$ is {\bf s}trongly {\bf c}onnected {\bf w}ith {\bf a}t 
{\bf l}east {\bf t}wo {\bf cy}cles (scwaltcy), they investigated the existence of a 
positive vector $x$ satisfying
\begin{equation}
\label{e:bbch}
 |a_{ii}| x_i\geq \sum_{j\neq i} |a_{ij}| x_j,\quad i=1,\ldots,n
\end{equation}
for all matrices from the class simultaneously, and described the cases when all inequalities
in~\eqref{e:bbch} are strict~\cite[Theorem 1.1]{BBCH}, at least one of 
the inequalities is strict~\cite[Theorem 1.2]{BBCH}, or all 
inequalities hold with an equality~\cite[Theorem 1.3]{BBCH}. These results imply 
generalizations of Ger\v{s}gorin's theorem  due to Brualdi \cite{B}.
 Following the statement of~\cite[Theorem 1.4]{BBCH} the authors provide a detailed
outline for the proof that Brualdi's conditions are sharp.

In this paper we mainly deal with the two classes of matrices 
described in the abstract.  These classes are similar to those in~\cite{BBCH}, 
but we drop the requirement that 
$\digr(B)$ is scwaltcy. In particular we  also handle the reducible (not strongly connected) case. 
However we do not prescribe the moduli of diagonal entries, and include these moduli in the row sums instead.
This allows us, in particular, to combine the problem statement of Boros, Brualdi, Crama and 
Hoffman~\cite{BBCH} with that of
Al'pin~\cite{A}, Elsner and van den Driessche \cite{EvdD} and to generalize all 
above mentioned results removing the restriction that $B$ is irreducible.
The main results of this paper  characterize the Perron roots or the sets of eigenvalues 
of the classes of matrices under consideration.

At the end of the paper we present a new constructive proof of the 
Camion-Hoffman theorem~\cite{CH} (see also~\cite{CopH}). This
theorem characterizes regularity of a class of complex matrices with prescribed 
moduli of
their entries. The scaling result of Section~\ref{ss:perronvis} is crucial for
our new proof (which also makes use of one of the previously mentioned
 characterization results). Since we are dealing with complex rather than 
with nonnegative matrices here, the triangle inequlity (implicit in Lemma~\ref{l:camhoff})
also plays a role. 

Other proofs of the Camion-Hoffman theorem have been given by Levinger and
Varga~\cite{LV}, and Engel~\cite{E}.

\subsection{Contents of the paper}

The rest of this paper is organized as follows.
Section~\ref{ss:fnf} is a reminder of
the Frobenius normal form of nonnegative matrices. 

Section~\ref{s:vis} is devoted to a form of diagonal similarity scaling called visualization
scaling~\cite{SSB} or Fiedler-Pt\'{a}k scaling~\cite{FP} (see also~\cite{Afr}).  
Interest in this scaling has been motivated by its use in max algebra, see for 
example~\cite{But} and~\cite{BS}. 
Lemmas~\ref{l:strictvis-max} and~\ref{l:strictvis-min} can be used to 
generalize the simultaneous
scaling results of~Boros, Brualdi, Crama and Hoffman~\cite[Theorems 1.1-1.3]{BBCH} 
to include the reducible 
case. This also yields a derivation of the bounds of 
Al'pin, Elsner and van den Driessche (Theorem~\ref{t:aevdd}).
Theorem~\ref{t:perronvis} establishes the existence of 
an advanced visualization scaling, which is applied
in the proof of the Camion-Hoffman theorem.

In Section~\ref{s:nonneg} we consider the class of nonnegative matrices with prescribed graph and prescribed row sums.  
Theorem~\ref{t:mainres} characterizes the set of 
possible Perron roots of such matrices also when $B$ is reducible. This is 
one of the main results 
of this paper. The proof is based on analyzing the sunflower subgraphs of $\digr(B)$, a technique well-known in 
max algebra~\cite{HOW}.   As an immediate corollary it follows from  
Theorem~\ref{t:mainres} that for irreducible $B$
with   $\nu(B)  < \mu(B)$  
and any  $r, \nu(B) < r < \mu(B)$ there exists $A$ with $\Aux(A) = B$ such 
that $\rho(A) = r$.

In Section~\ref{ss:complex} we consider the class 
of complex matrices with prescribed graph and prescribed row sums of the moduli of their entries.
We seek a characterization of the set of nonzero eigenvalues of such matrices, 
starting with the irreducible case in Theorem~\ref{t:crhorange-irred} . 
In this case we show in particular that when $B$ has more than one cycle, the set 
of possible nonzero eigenvalues of $A$ satisfying $\Aux(A) = B$ consists either of all $s$ 
satisfying $0 < |s| < \mu(B)$ when $\nu(B) < \mu(B)$, or  $0 < |s| \leq \mu(B)$ 
if $\nu(B) =  \mu(B)$.
 Then, based on the irreducible case the full characterization in the reducible case is given in
Theorem~\ref{t:crhorange-red}. In addition to this, the occurance of a $0$ 
eigenvalue is treated in Theorem~\ref{t:zero}. 

In Section~\ref{ss:camhoff} a new proof of the Camion-Hoffman theorem~\cite{CH} is given,
based on the advanced visualization scaling 
of Section~\ref{ss:perronvis}  and the characterization result of Theorem~\ref{t:crhorange-red}.

\subsection{Frobenius normal form}

\label{ss:fnf}

Let $A$ be a square nonnegative matrix. If $A$ is irreducible (i.e., the associated digraph
is strongly connected) then according to the Perron-Frobenius theorem A has a unique (up to a multiple)
positive eigenvector corresponding to the Perron root $\rho(A)$ (which is also the greatest modulus of all
eigenvalues of $A$). If $A$ is reducible then by means of simultaneous permutations of rows and columns or, equivalently, an 
application of $P^{-1}AP$ similarity where $P$ is a permutation matrix, $A$ can be
brought to the following form:
\begin{equation*}
\begin{pmatrix}
A_1 & 0      &   0      & 0   \\
*   &  A_2   &   0      & 0   \\
*   &  *     &  \ddots & 0    \\
*   &  *     &   *     & A_m  
\end{pmatrix}.
\end{equation*}
where the square blocks $A_1,\ldots, A_m$ correspond to the maximal 
strongly connected components of the associated graph.
These diagonal blocks $A_1,\ldots, A_m$ will be further referred to as {\em classes} of $A$.
Note that 
each class $A_i$ is either a nonzero irreducible matrix, in which case it is called
{\em nontrivial}, or a zero diagonal entry (and then it is called {\em trivial}).
If some component $\digr(A_i)$ of the associated graph $\digr(A)$ 
does not have access to any other component, which means that 
there is no edge connecting one of its 
nodes to a node in another component, then this component or the corresponding class $A_i$ are called {\em final}.
Otherwise, this component or the corresponding class are called {\em transient}.

The entries denoted by $0$ are actually off-diagonal blocks of zeros of appropriate dimension,
and $*$ denote submatrices of
approriate dimensions whose zero-nonzero pattern is unimportant.

\section{Visualization scaling}

\label{s:vis}

\subsection{Visualization of auxiliary matrices}

\label{ss:visaux}

In this section we assume that $A$ is a nonnegative matrix such that 
$\digr(A)$ contains at least one cycle.
Let us introduce some terminology related to max algebra and visualization. 

\begin{definition}
\label{def:critanti}
For a nonnegative matrix $A$, the {\rm critical graph} $\crit(A)=(N_c(A),E_c(A))$ is defined as 
the subgraph of $\digr(A)$ consisting of all nodes $N_c(A)$ and edges $E_c(A)$ on the
cycles whose geometric mean equals $\mu(A)$. These nodes and edges are also called critical. 
A node is called {\rm strictly critical} if all edges emanating from it are critical.

Similarly, by {\rm anticritical graph} we mean the subgraph of $\digr(A)$ consisting of all nodes and edges on the
cycles whose geometric mean equals $\nu(A)$ (also speaking of anticritical nodes and edges).
A node is called {\rm strictly anticritical} if all edges emanating from it are anticritical.
\end{definition}

\begin{definition}
\label{def:vis}
A positive vector $x$ is called a {\rm visualizing}, resp. {\rm strictly visualizing, vector} of $A$ 
if $a_{ij}x_j\leq \mu(A)x_i$ for all $(i,j)\in E(A)$, resp. if also $a_{ij}x_j=\mu(A)$ 
if and only if $(i,j)$ is critical. 
\end{definition}
Existence of such vector was 
proved by Engel and Schneider~\cite[Theorem 7.2]{ES}
in the irreducible case, and was extended to reducible matrices in~{\cite{SSB}.

\begin{definition}
\label{def:antivis}
A positive vector $x$ is called an {\rm antivisualizing}, resp. a {\rm strictly antivisualizing, vector} of $A$ 
if $a_{ij}x_j\geq \nu(A)x_i$ for all $(i,j)\in E(A)$, resp. if also $a_{ij}x_j=\nu(A)$ 
if and only if $(i,j)$ is anticritical. 
\end{definition}
An existence of such scaling follows from the existence of 
visualization scaling, applied to a matrix resulting from $A$ after elementwise inversion of the entries.

The following lemmas are based on the results on simultaneous scaling found in~\cite[Theorems 1.1-1.3]{BBCH}.
We make arguments of~\cite{BBCH} more precise by basing them on the existence of strictly visualizing vectors~\cite{SSB}.

\begin{lemma}[cf.~\cite{BBCH}]  
\label{l:strictvis-max}
Let $A$ be a nonnegative matrix and let $B = \Aux(A)$ with $\mu(B)\neq 0$. Let $x$ be a strictly visualizing vector 
of $B$. Then we have $Ax\leq \mu(B)x$ and, more precisely, $(Ax)_i=\mu(B)x_i$ if $i$ is a strictly critical node of
$B$ and $(Ax)_i<\mu(B)x_i$ otherwise.
\end{lemma}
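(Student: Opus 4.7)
The plan exploits the defining feature of $B=\Aux(A)$: every nonzero entry of row $i$ of $B$ equals the common value $r_i(A)=\sum_k a_{ik}$, and the graphs $\digr(A)$ and $\digr(B)$ coincide (since $a_{ij}\neq 0$ iff $b_{ij}\neq 0$). The strict visualization inequality $b_{ij}x_j\leq \mu(B)x_i$ along an edge $(i,j)$ therefore becomes the \emph{uniform} bound $x_j\leq \mu(B)x_i/r_i(A)$ for every $j$ in the support of row $i$, with equality exactly when $(i,j)$ is critical.

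First I would dispose of rows $i$ with no outgoing edge in $\digr(A)$: for such $i$ we have $(Ax)_i=0$ while $\mu(B)x_i>0$ by the hypothesis $\mu(B)\neq 0$ and positivity of $x$, so the strict inequality $(Ax)_i<\mu(B)x_i$ holds; such a node cannot lie on any cycle and hence fails to be critical, in particular it is not strictly critical, so this case is consistent with the statement.

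For the remaining rows, those with $r_i(A)>0$, I would multiply the uniform bound by $a_{ij}$ and sum over $j$ in the support of row $i$:
\[
(Ax)_i \;=\; \sum_{j:\,a_{ij}\neq 0} a_{ij}x_j \;\leq\; \frac{\mu(B)x_i}{r_i(A)}\sum_{j:\,a_{ij}\neq 0} a_{ij} \;=\; \mu(B)x_i,
\]
which yields $Ax\leq \mu(B)x$. Equality in this sum holds iff $x_j=\mu(B)x_i/r_i(A)$ for every $j$ with $a_{ij}\neq 0$, which by strict visualization is equivalent to every emanating edge $(i,j)$ being critical, i.e.\ to $i$ being strictly critical. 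Otherwise at least one summand is strict and the whole inequality becomes strict.

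I do not anticipate any substantive obstacle: the construction of $B$ makes the visualization bound identical for all successors of a given node, which is precisely what allows the inequality to be pulled through the convex combination with weights $a_{ij}/r_i(A)$. The only subtle point is the separate treatment of rows with $r_i(A)=0$, which is handled once at the beginning.
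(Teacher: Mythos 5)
Your proof is correct and follows essentially the same route as the paper: both exploit the fact that all nonzero entries $b_{ij}$ in row $i$ of $B=\Aux(A)$ equal the common row sum $r_i(A)$, so the visualization inequality $b_{ij}x_j\leq\mu(B)x_i$ becomes a uniform bound $x_j\leq\mu(B)x_i/r_i(A)$ across the support of row $i$, which is then summed against the weights $a_{ij}$. Your version is slightly more streamlined (you treat the convex combination directly rather than extracting a maximizing index $k$ as the paper does), and you explicitly dispose of the degenerate case $r_i(A)=0$, which the paper's case analysis tacitly assumes away; these are presentational rather than substantive differences.
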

\begin{proof}
Assume that $\mu(B)=1$. Then 
\beq 
\label {maxvec}
\begin{split}
&\max_j \frac{b_{ij}x_j}{x_i}\leq  1 \quad \text{for all $i$},\\
&\max_j \frac{b_{ij}x_j}{x_i}= 1 \quad \text{for all critical $i$}.
\end{split}
\eeq

If $i$ is strictly critical, we have
\begin{equation}
\forall j: (i,j)\in E(A)\ \frac{b_{ij}x_j}{x_i}=1 ,\\
\end{equation}
which implies that $x_j=x_k$ for all $j$ and $k$ such that both $(i,j)\in E(A)$ and
$(i,k)\in E(A)$. Hence we can take any $k$ with $(i,k)\in E(A)$, and obtain
\begin{equation}
\frac{\sum_j a_{ij} x_j}{x_i}=\frac{(\sum_j a_{ij}) x_k}{x_i}= \frac{b_{ik} x_k}{x_i}=1
\end{equation}

If $i$ is not strictly critical then let us denote 
\beq \label{defm}
x_k = \max_j\{x_j \colon (i,j)\in E(A)\}.
\eeq
If $i$ is not critical then
\begin{equation}
\frac{\sum_j a_{ij} x_j}{x_i}\leq\frac{(\sum_j a_{ij}) x_k}{x_i}= \frac{b_{ik} x_k}{x_i}<1
\end{equation}
If $i$ is critical (but not strictly) then
\begin{equation}
\exists l,h\ \frac{b_{il}x_l}{x_i}=1,\  \frac{b_{ih}x_h}{x_i}<1,
\end{equation}
which implies $x_l=x_k>x_h$ for these $l$ and $h$. 
In particular, note that we have $(i,k)\in E_c(B)$.
Hence
\begin{equation}
\frac{\sum_j a_{ij} x_j}{x_i}<\frac{(\sum_j a_{ij}) x_k}{x_i}= \frac{b_{ik} x_k}{x_i}=1.
\end{equation}
\end{proof}

\begin{lemma}[cf.~\cite{BBCH}]  
\label{l:strictvis-min}
Let $A$ be a nonnegative matrix and let $B = \Aux(A)$ with $\nu(B)\neq 0$. Let $x$ be a strictly antivisualizing vector 
of $B$. Then we have $Ax\geq \nu(B)x$ and, more precisely, $(Ax)_i=\nu(B)x_i$ if $i$ is a strictly anticritical node of
$B$ and $(Ax)_i>\nu(B)x_i$ otherwise.
\end{lemma}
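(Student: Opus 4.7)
The plan is to mimic the proof of Lemma~\ref{l:strictvis-max} with all inequalities and extremal operations reversed, since the setup is exactly dual: $\nu(B)$ replaces $\mu(B)$, the strictly antivisualizing condition gives $b_{ij}x_j/x_i \geq 1$ (after normalizing $\nu(B)=1$) with equality precisely on anticritical edges, and we will bound $Ax$ from below using the \emph{minimum} of $x_j$ over outgoing neighbors instead of the maximum.

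First I would normalize by assuming $\nu(B)=1$, so that the strictly antivisualizing condition reads
\[
\min_j \frac{b_{ij}x_j}{x_i} \geq 1 \text{ for all } i, \qquad \min_j \frac{b_{ij}x_j}{x_i} = 1 \text{ for all anticritical } i,
\]
with the defining property that $b_{ij}x_j/x_i = 1$ if and only if $(i,j) \in E_c^{\mathrm{anti}}(B)$ is an anticritical edge. Then I would split into three cases for each row index $i$ with at least one outgoing edge.

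If $i$ is strictly anticritical, every outgoing edge $(i,j) \in E(A)$ satisfies $b_{ij}x_j/x_i = 1$; since $b_{ij} = \sum_k a_{ik}$ is the same constant for all such $j$, this forces $x_j$ to be constant across outgoing neighbors of $i$. Picking any such $k$ with $(i,k)\in E(A)$ gives
\[
\frac{\sum_j a_{ij}x_j}{x_i} = \frac{(\sum_j a_{ij})\,x_k}{x_i} = \frac{b_{ik}x_k}{x_i} = 1,
\]
so $(Ax)_i = x_i = \nu(B)x_i$. If $i$ is not anticritical, let $k$ achieve $x_k = \min_j\{x_j : (i,j)\in E(A)\}$; then
\[
\frac{\sum_j a_{ij}x_j}{x_i} \geq \frac{(\sum_j a_{ij})\,x_k}{x_i} = \frac{b_{ik}x_k}{x_i} > 1,
\]
and so $(Ax)_i > x_i$. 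Finally, if $i$ is anticritical but not strictly, there exist $l,h$ with $b_{il}x_l/x_i = 1$ and $b_{ih}x_h/x_i > 1$; since $b_{il}=b_{ih}$, this gives $x_l < x_h$, so the minimum $x_k$ over outgoing neighbors is attained at some anticritical edge and strictly below $x_h$. Then
\[
\frac{\sum_j a_{ij}x_j}{x_i} > \frac{(\sum_j a_{ij})\,x_k}{x_i} = \frac{b_{ik}x_k}{x_i} = 1,
\]
yielding $(Ax)_i > x_i$.

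I do not expect any genuine obstacle here; the only point requiring a little care is the middle case (anticritical but not strictly), where one must check that the minimum $x_k$ is realized on an anticritical edge so that $b_{ik}x_k/x_i = 1$, and that strict inequality is ensured by the presence of at least one non-anticritical outgoing edge giving $x_h > x_k$. Rescaling back from $\nu(B)=1$ to general $\nu(B)$ by multiplying $A$ (equivalently $B$) by $1/\nu(B)$ completes the argument.
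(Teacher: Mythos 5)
Your proof is correct and is exactly the dual of the paper's argument for Lemma~\ref{l:strictvis-max}, with $\mu$ replaced by $\nu$, maxima by minima, and all inequalities reversed; the paper itself states Lemma~\ref{l:strictvis-min} without a separate proof, implicitly leaving it to this duality, so your write-out matches the intended approach. The one point you flagged as needing care—that in the ``anticritical but not strictly'' case the minimum $x_k$ is attained on an anticritical edge—is indeed the right thing to check, and it follows as you say from $b_{ij}$ being constant across the row together with $b_{ij}x_j/x_i\geq 1$ with equality exactly on anticritical edges.
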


\subsection{Bounds of Alpin, Elsner, van den Driessche}

\label{ss:aevdd}

We call a nonnegative matrix $A$ {\em truly substochastic}, if 
$\sum_j a_{ij} \leq 1$ for all $i$ and  
$\sum_j a_{ij} < 1$ for some $i$. In a similar way, $A$ is called
{\em truly superstochastic} if
$\sum_j a_{ij} \geq 1$ for all $i$ and  
$\sum_j a_{ij} > 1$ for some $i$. 

The following known result can be now obtained from Lemmas~\ref{l:strictvis-max} and~\ref{l:strictvis-min}.

\begin{theorem}[\cite{A}, {\cite{EvdD}[Theorem A]}]
\label{t:aevdd}
Let $A$ be an irreducible nonnegative matrix and let $B=\Aux(A)$.
\begin{itemize}
\item[(i)] 
If $\mu(B)=\nu(B)$, then $A$ is diagonally similar to a stochastic matrix multiplied by $\mu(B)$.  
In this case, $\rho(A) = \mu(B)=\nu(B)$.
\item[(ii)] 
If $\nu(B)<\mu(B)$, then $A$ is diagonally similar to a truly substochastic matrix multiplied by $\mu(B)$.
In this case, $\nu(B)<\rho(A)<\mu(B)$.
\end{itemize}
\end{theorem}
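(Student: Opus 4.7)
The plan is to derive both parts directly from Lemmas~\ref{l:strictvis-max} and~\ref{l:strictvis-min} by turning a (strictly) visualizing vector into the diagonal scaling, and then reading off the Perron root via the Collatz--Wielandt inequality for irreducible nonnegative matrices.

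For part (i), I first note that $A$ irreducible forces $B=\Aux(A)$ to be irreducible and every edge of $\digr(B)$ to lie on some cycle. If $\mu(B)=\nu(B)$, then every cycle of $B$ has geometric mean $\mu(B)$, so every edge is critical and hence every node is strictly critical in the sense of Definition~\ref{def:critanti}. Let $x>0$ be a strictly visualizing vector of $B$ (existence is cited from~\cite{SSB} immediately after Definition~\ref{def:vis}). Lemma~\ref{l:strictvis-max} then gives $(Ax)_i=\mu(B)x_i$ for every $i$, i.e.\ $Ax=\mu(B)x$. Setting $D=\diag(x)$, the matrix $\mu(B)^{-1}D^{-1}AD$ has all row sums equal to $1$, so $A$ is diagonally similar to $\mu(B)$ times a stochastic matrix, and $\rho(A)=\mu(B)$ since $x>0$ is a Perron eigenvector.

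For part (ii), assume $\nu(B)<\mu(B)$. Then not every cycle of $B$ is critical, which in the irreducible case forces the existence of at least one node that is not strictly critical: indeed, if every node were strictly critical then every edge (and hence every cycle, by irreducibility) would be critical, contradicting $\nu(B)<\mu(B)$. Let $x>0$ be a strictly visualizing vector of $B$ and $D=\diag(x)$. By Lemma~\ref{l:strictvis-max}, $Ax\leq\mu(B)x$ with strict inequality at any non-strictly-critical row, so $\mu(B)^{-1}D^{-1}AD$ is truly substochastic. Applying the same argument to a strictly antivisualizing vector $y>0$ via Lemma~\ref{l:strictvis-min} yields $Ay\geq\nu(B)y$ with strict inequality at some coordinate (symmetric reasoning: if every node were strictly anticritical, every cycle would be anticritical, again contradicting $\nu(B)<\mu(B)$).

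Finally, the bounds on $\rho(A)$ follow from the standard Collatz--Wielandt characterization for an irreducible nonnegative $A$: if $Ax\leq\lambda x$ for $x>0$ with strict inequality in at least one coordinate, then $\rho(A)<\lambda$, and analogously from below. Applying this twice produces $\nu(B)<\rho(A)<\mu(B)$. The only slightly delicate point I would want to double-check is the implication ``$\nu(B)<\mu(B)\Rightarrow$ some node fails to be strictly critical (and, symmetrically, strictly anticritical)''; this is the main place where irreducibility is used, since in a strongly connected graph the union of all cycles through a fixed node covers all edges out of that node.
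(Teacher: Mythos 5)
Your proof is correct and follows essentially the same route as the paper: both parts are derived from Lemmas~\ref{l:strictvis-max} and~\ref{l:strictvis-min} via a strictly (anti)visualizing vector, using the observation that for irreducible $B$ every node is strictly critical (resp.\ anticritical) precisely when $\mu(B)=\nu(B)$. The only cosmetic difference is in closing the bound $\nu(B)<\rho(A)<\mu(B)$: you invoke the strict Collatz--Wielandt inequality directly, whereas the paper first observes that the scaled matrix is an irreducible truly sub/superstochastic matrix (and then also records the Collatz--Wielandt-style alternative with a positive left eigenvector).
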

\begin{proof}

(i): As $B$ is irreducible and $\mu(B)=\nu(B)$, all nodes of $\digr(B)$ are strictly
critical. Taking any visualization\footnote{not necessarily strict} 
$x$ of $B$ we have $Ax=\mu(B)x$, which implies $\rho(A)=\mu(B)=\nu(B)$.
We also have that $X^{-1}AX$, with $X=\diag(x)$, is a stochastic matrix multiplied by $\rho(A)=\mu(B)=\nu(B)$

(ii): As $\mu(B)>\nu(B)$, not all nodes of $\digr(B)$ are strictly critical. 
Taking any strictly visualizing vector $x$ of $B$ we have $Ax\leq\mu(B) x$ where $(Ax)_i<\mu(B) x_i$
for some $i$.
We also have that $X^{-1}AX$ with $X=\diag(x)$, is a truly substochastic
matrix multiplied by $\mu(B)$, as claimed. As $X^{-1}AX$ is also irreducible, it 
follows that $\rho(A)=\rho(X^{-1}AX)< \mu(B)$. The inequality $\rho(A)<\mu(B)$
can be also obtained (following an argument found, for instance in~\cite{EvdD}) 
by multiplying the system $Ax\leq \mu(B)x$, where at least one of the inequalities is strict, 
from the left by a row vector $z$ such that $z A= \rho(A)z$ (which does not have $0$ components if $A$ is irreducible).   

Not all nodes of $\digr(B)$ are strictly anticritical, either.
Taking any strictly antivisualizing vector $y$ of $B=\Aux(A)$ we have $Ay\geq\nu(B) y$ where $(Ay)_i>\nu(B) y_i$
for some $i$.
We also have that $Y^{-1}AY$ with $Y=\diag(y)$, is a truly superstochastic
matrix multiplied by $\mu(B)$, as claimed. As $Y^{-1}AY$ is also irreducible, it 
follows that $\rho(A)=\rho(Y^{-1}AY)>\nu(B)$. The inequality $\rho(A)>\nu(B)$
can be also obtained 
by multiplying the system $Ay\geq \mu(B)y$, where at least one of the inequalities is strict, 
from the left by a row vector $z$ such that $z A= \rho(A)z$ (which does not have $0$ components if $A$ is irreducible).  
\end{proof}

\subsection{Sum visualization}

\label{ss:perronvis}


\begin{definition}

\label{def:asumvis}
For $A\in\Rpnn$ and $a > 0$, a vector $x\in\Rpn$ is called an $a$-sum visualizing vector
of $A$, if the entries of $C = X^{-1}AX$ with $X = \diag(x)$ satisfy $c_{ij}\leq a$ for
all $i,j$ and $\sum_j  c_{ij}\geq  a$ for all $i$. In this case C is called an $a$-sum 
visualization of $A$.
\end{definition}

Recall that we have $\mu(A)\leq\rho(A)$ for any nonnegative matrix. Indeed, since for
any positive $x$ and any cycle $(i_1,\ldots i_{\ell})$ we have that 
$$\left(a_{i_1i_2}\frac{x_{i_2}}{x_{i_1}}\cdot a_{i_2i_3}\frac{x_{i_3}}{x_{i_2}}
\cdot\ldots\cdot a_{i_{\ell}i_1}\frac{x_{i_{\ell}}}{x_{i_1}}\right)^{1/{\ell}}
\leq \left(\prod_{k\in\{i_1,\ldots,i_{\ell}\}} \sum_j 
a_{kj}\frac{x_j}{x_k}\right)^{1/{\ell}},$$
it follows by taking $x$ satisfying $Ax=\rho(A)x$, that 
$\mu(A)\leq\rho(A)$. 

\begin{theorem} 
\label{t:perronvis}
Let $A\in\Rpnn$ be irreducible, and define $\alpha(A)$ 
as the set of positive numbers $a$ for which
an $a$-sum visualization of $A$ exists. Then $\alpha(A) = [\mu(A), \rho(A)]$.
\end{theorem}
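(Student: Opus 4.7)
The plan is to prove the two inclusions $\alpha(A) \subseteq [\mu(A),\rho(A)]$ and $[\mu(A),\rho(A)] \subseteq \alpha(A)$ separately.

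For the easy direction: given an $a$-sum visualization $C = X^{-1}AX$, diagonal similarity preserves cycle products, so the entry bound $c_{ij}\leq a$ forces every cycle of $A$ to have geometric mean at most $a$, hence $\mu(A)\leq a$. The row-sum condition translates to $Ax \geq a x$, and the Collatz--Wielandt characterization of $\rho(A)$ gives $a \leq \rho(A)$.

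For the reverse inclusion I would first handle the endpoints. At $a = \rho(A)$, the linear Perron eigenvector $p$ of $A$ yields $(Ap)_i = \rho(A) p_i$, so all row sums of $X^{-1}AX$ equal $\rho(A)=a$ while each entry is automatically dominated by its row sum. At $a = \mu(A)$, a max-algebraic eigenvector $q$ of $A$ (which exists for irreducible $A$ by max-algebraic Perron--Frobenius) satisfies $\max_j a_{ij} q_j = \mu(A) q_i$, immediately yielding the entry bound and, via $\sum\geq\max$, the row-sum bound.

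For $a$ strictly inside the interval the idea is to blend the two endpoint constructions. Set $D = A/a$, so that $\mu(D)\leq 1$ and $\rho(D)\geq 1$, let $D^* = \bigoplus_{k\geq 0} D^{\otimes k}$ be the max-algebraic Kleene star (finite, with $D^*_{ii}=1$ and satisfying $D^* = I \oplus (D\otimes D^*)$), and take $x = D^* \otimes p$ where $p$ is the linear Perron eigenvector of $A$. The entry bound $D\otimes x \leq x$ follows at once from the Kleene-star identity since $D\otimes x = (D\otimes D^*)\otimes p \leq D^*\otimes p = x$.

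The row-sum bound $Dx \geq x$ I anticipate as the main obstacle. My plan is, for each $i$, to fix a maximizer $j_0 = j_0(i)$ in $x_i = \max_j D^*_{ij} p_j$ and split cases. When $j_0 \neq i$, the Kleene-star identity gives $D^*_{i j_0} = (D\otimes D^*)_{i j_0} \leq (DD^*)_{i j_0}$ (the max bounded by the ordinary sum), so $(Dx)_i \geq (DD^*)_{i j_0}\, p_{j_0} \geq D^*_{i j_0}\, p_{j_0} = x_i$. When $j_0 = i$ one has $x_i = p_i$, and since $D^*\geq I$ implies $x\geq p$ componentwise, $(Dx)_i \geq (Dp)_i = (\rho(A)/a)\, p_i \geq p_i = x_i$, using $a\leq\rho(A)$. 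Combining the cases yields the required $a$-sum visualization and completes the reverse inclusion.
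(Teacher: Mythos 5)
Your proof is correct, and it takes a genuinely different route from the paper's. Both arguments agree on the easy inclusion $\alpha(A)\subseteq[\mu(A),\rho(A)]$. For the reverse inclusion, the paper normalizes to $a=1$, first applies a visualization scaling to reduce to $G=X^{-1}AX$ with $0\leq g_{ij}\leq 1$, and then runs an iteration $y^{(s+1)}=f(y^{(s)})$ with $f_i(x)=\min\bigl(x_i,\sum_j g_{ij}x_j\bigr)$ starting from the all-ones vector, proving the sequence is nonincreasing, bounded below by a scaled Perron eigenvector, and that its limit $y$ simultaneously satisfies $g_{ij}y_j\leq y_i$ and $\sum_j g_{ij}y_j\geq y_i$. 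You instead give a closed-form scaling vector $x=D^*\otimes p$ (Kleene star of $D=A/a$ applied in max-algebra to the linear Perron eigenvector $p$), and verify both constraints directly: the entry bound is immediate from $D\otimes D^*\leq D^*$, and the row-sum bound follows from a clean case split on whether the max-algebraic maximizer $j_0$ in $x_i=\max_j D^*_{ij}p_j$ equals $i$ or not, using $\max\leq\sum$ in the off-diagonal case and $\rho(A)\geq a$ in the diagonal case. Your construction is more explicit and shows the scaling vector can be written in one stroke from standard max-algebra objects, which dovetails nicely with the paper's surrounding material on visualization and Kleene stars; the paper's iterative argument is more elementary (it avoids Kleene-star machinery entirely) and has an algorithmic flavor, but requires the separate monotonicity, boundedness, and continuity bookkeeping. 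A small remark: your separate treatment of the two endpoints is not actually needed, since the Kleene-star construction works verbatim when $a=\mu(A)$ (the star is still finite because $\mu(D)\leq 1$) and when $a=\rho(A)$ (the $j_0=i$ case degenerates to equality), so you could streamline by running the interior argument over the whole closed interval.
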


\begin{proof}
1. $\alpha(A)\subseteq [\mu(A),\rho(A)]$: 

\medskip\noindent Let $a\in\alpha(A)$ and let $C=X^{-1}AX$ (for some 
diagonal $X$) be such that $c_{ij}\leq a$ for all $i,j$ and
$\sum_j c_{ij}\geq a$ for all $i$.  Then $\mu(C)\leq a$ and
$\rho(C)\geq a$, and as $\mu(A)=\mu(C)$ and $\rho(A)=\rho(C)$
we obtain that $a\in[\mu(A),\rho(A)]$.

2. $[\mu(A),\rho(A)]\subseteq\alpha(A)$:

\medskip\noindent Let $\mu(A)\leq a\leq\rho(A)$.
We can assume without loss of generality (dividing $A$ by $a$
if necessary) that $a=1$ and $\mu(A)\leq 1\leq \rho(A)$.

As $\mu(A)\leq 1$, 
there exists a non singular diagonal matrix $X$ such that all entries $g_{ij}$ of 
$G:=X^{-1} A X$ satisfy $0\leq g_{ij}\leq 1$. Since $G$ is diagonally similar to $A$, 
$\rho(A)$ is also the spectral radius of $G$ and hence the exists a vector $z$ whose entries $z_i$ satisfy 
$1=\max_i z_i$ and $\sum_j g_{ij}\frac{z_j}{z_i} \geq 1$ for all $i$ .

We will now construct an entrywise nonincreasing sequence of vectors $\{y^{(s)}\}_{s\geq 0}$
bounded from below by $z$. Such a 
sequence obviously converges, and as we will argue, the limit denoted by $y$ satisfies $g_{ij}\frac{y_j}{y_i} \leq 1$
for all $i, j$, and $\sum_j g_{ij}\frac{y_j}{y_i} \geq 1$ for all $i$ (and,  obviously, $y\geq z$).

Let us define a continuous mapping $f\colon (\Rp\backslash\{0\})^n \to (\Rp\backslash\{0\})^n$, by its components
\begin{equation}
\label{def:fi}
f_i(x)=\min(x_i,\sum_j g_{ij}x_j),\quad i=1,\ldots,n.
\end{equation}


Now let $y^{(0)}=(1,1\ldots 1)$ and consider a sequence $\{y^{(s)}\}_{s\geq 0}$ 
defined by $y^{(s+1)}:= f(y^{(s)})$ (that is, the orbit of $y^{(0)}$ under $f$).

Observe that $y^{(s+1)}\leq y^{(s)}$, as $f(x)\leq x$ for all $x\in (\Rp\backslash\{0\})^n$. 

It follows by induction that $y^{(s)}\geq z$ for all $s$. The case $s=0$ is the basis of induction (since 
$z_i\leq 1$ for all $i$). We have to show that 
$y^{(s+1)}\geq z$ knowing that $y^{(s)}\geq z$. It amounts to verify that
$y^{(s+1)}_k\geq z_k$ for the indices $k$ where $y^{(s+1)}_k<y^{(s)}_k$. For such indices 
we have 
$$
y_k^{(s+1)}=\sum_j g_{kj} y_j^{(s)}\geq \sum_j g_{kj} z_j^{(s)}\geq z_k.
$$

As the sequence $\{y^{(s)}\}_{s\geq 0}$ is nonincreasing and bounded from below, it has a limit
which we denote by $y$. As $f$ is continuous, this limit satisfies $f(y)=y$, which by the definition of
$f$  implies that $\sum_j g_{ij}\frac{y_j}{y_i}\geq 1$ for all $i$.

We now show by induction that  $g_{ij}\frac{y^{(s)}_j}{y^{(s)}_i}\leq 1$, for all $i\neq j$ and $s$. 
Denote by $I_s$ the set of indices $i$ where $\sum_j g_{ij} y_j^{(s)}< y_i^{(s)}$ . Thus
$y^{(s+1)}_i= \sum_j g_{ij} y_j^{(s)}$ and $y_i^{(s+1)}<y_i^{(s)}$ for $i\in I_s$, while
$y^{(s+1)}_i=y^{(s)}_i$ for $i\notin I_s$.

Observe that $s=0$ is the basis
of induction, so we assume that the claim holds for $s$ and we have to prove it for $s+1$. 
For $i,j\notin I_s$ the inequality $g_{ij}\frac{y^{(s+1)}_j}{y^{(s+1)}_i}\leq 1$ holds trivially.
If $i\in I_s$ then
$$g_{ij}\frac{y^{(s+1)}_j}{y^{(s+1)}_i}\leq g_{ij}\frac{y^{(s)}_j}{y^{(s+1)}_i}=
g_{ij} y^{(s)}_j(\sum_k g_{ik} y^{(s)}_k)^{-1}\leq 1$$
(where the last inequality follows since $g_{ij} y^{(s)}_j$ is just one of the nonnegative
terms of the sum in the denominator).

Finally if $i\notin I_s$ and $j\in I_s$: then we have 
$g_{ij}\frac{y^{(s+1)}_j}{y^{(s+1)}_i}< g_{ij}\frac{y^{(s)}_j}{y^{(s+1)}_i}= 
g_{ik}\frac{y^{(s)}_j}{y^{(s)}_i}\leq 1$.

Thus the inequalities $g_{ij}\frac{y^{(s)}_j}{y^{(s)}_i}\leq 1$ hold for all $i\neq j$ and $s$, and this implies that
for the limit point $y$,  all the inequalities  $g_{ij}\frac{y_j}{y_i}\leq 1$ hold as well. 
The case $i=j$ is trivial since the inequality $g_{ii} \frac{y_i}{y_i}=g_{ii}\leq 1$ holds for all $i$.


Let $D$ be the diagonal matrix with 
$d_{ii}=y_ix_i$ for all i. For the entries $c_{ij}$ of $C=D^{-1}A D$ we have 
$c_{ij} \leq 1$ for all $i, j$ and $\sum_j c_{ij} \geq 1$ for all $i$ so the theorem is proved.  
\end{proof} 

Denote by $A^{[-1]}=(a_{ij}^{[-1]})$ the Hadamard inverse of $A\in\Rpnn$:
$$
a^{[-1]}_{ij}=
\begin{cases}
\frac{1}{a_{ij}}, & \text{if $a_{ij}>0$},\\
0, & \text{if $a_{ij}=0$}.
\end{cases}
$$
Observe that $\mu(A^{[-1]})=(\nu(A))^{-1}$ (however, there is no such inversion for the
Perron root), and let us formulate the following corollary of Theorem~\ref{t:perronvis}.

\begin{corollary} 
\label{c:perronvis}
Let  $A\in\R_{+}^{n\times n}$ be irreducible. The following are equivalent:
\begin{itemize}
\item[(i)] $\frac{1}{a}\in[\frac{1}{\nu(A)},\rho(A^{[-1]})]$;
\item[(ii)] $\exists x>0$ such that for $C=X^{-1} AX$  with $X=diag(x)$ we 
have that $c_{ij}\geq a$ for all $i,j$. and  $\sum_j \frac{a}{c_{ij} }\geq 1$
for all $i$.
\end{itemize}
\end{corollary}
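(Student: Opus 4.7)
The plan is to deduce the corollary directly from Theorem~\ref{t:perronvis}, applied to the Hadamard inverse $A^{[-1]}$, via the change of variables $y_i = 1/x_i$ which flips every relevant inequality.

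First I would note that $A^{[-1]}$ has the same zero-nonzero pattern as $A$, hence is irreducible, so Theorem~\ref{t:perronvis} applies and yields $\alpha(A^{[-1]}) = [\mu(A^{[-1]}), \rho(A^{[-1]})]$. Using the identity $\mu(A^{[-1]}) = 1/\nu(A)$ already recorded above the corollary, and choosing the parameter to be $b := 1/a$, statement (i) becomes precisely the assertion $b \in \alpha(A^{[-1]})$, i.e., that there exists a positive vector $y$ such that $D := Y^{-1} A^{[-1]} Y$ (with $Y = \diag(y)$) satisfies $d_{ij} \le 1/a$ for all $i,j$ and $\sum_j d_{ij} \ge 1/a$ for all $i$.

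Next I would set up the correspondence between scalings of $A$ and $A^{[-1]}$. Putting $x_i := 1/y_i$ and $X = \diag(x)$, so $C := X^{-1} A X$, a direct computation gives $c_{ij} = a_{ij}\, x_j/x_i = a_{ij}\, y_i/y_j$, whereas $d_{ij} = a_{ij}^{[-1]}\, y_j/y_i$. Hence for every position with $a_{ij} \ne 0$ we have the reciprocal relation
\begin{equation*}
c_{ij}\, d_{ij} = 1,
\end{equation*}
while for $a_{ij} = 0$ both entries vanish, so neither side contributes to the inequalities being checked. Consequently $d_{ij} \le 1/a$ (at the nonzero positions) is equivalent to $c_{ij} \ge a$, and $\sum_j d_{ij} \ge 1/a$ rewrites as $\sum_j 1/c_{ij} \ge 1/a$, i.e.\ $\sum_j a/c_{ij} \ge 1$. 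Therefore the existence statement in (ii) is exactly the existence statement of a $(1/a)$-sum visualization of $A^{[-1]}$, and by Theorem~\ref{t:perronvis} this holds iff $1/a \in [1/\nu(A), \rho(A^{[-1]})]$, completing the equivalence.

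I do not anticipate a genuine obstacle: the only points requiring care are making the bijection $y \leftrightarrow x = y^{[-1]}$ between positive vectors explicit, and being consistent about the implicit convention that the condition ``$c_{ij} \ge a$'' and the sum $\sum_j a/c_{ij}$ range over the support of $A$ (the positions where $a_{ij} \ne 0$), which matches the corresponding convention used implicitly when writing $a_{ij}^{[-1]}$.
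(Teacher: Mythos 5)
Your proof is correct and matches the paper's (one-line) argument: the paper also obtains the corollary by applying Theorem~\ref{t:perronvis} to the Hadamard inverse $A^{[-1]}$; you have simply spelled out the necessary change of scaling vector $x=y^{[-1]}$ and the support convention that the paper leaves implicit.
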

\begin{proof}
The corollary follows by elementwise inversion of 
the nonzero entries and applying Theorem~\ref{t:perronvis}. 
\end{proof}

\section{Nonnegative reducible matrices}
\label{s:nonneg}

Here we characterize Perron roots of nonnegative matrices with prescribed 
row sums and prescribed graph.  Section~\ref{ss:sunflo} is devoted to sunflower graphs,
which will be used in the proof of the main result.  Section~\ref{ss:nonneg}
contains the main result and example.

\subsection{Sunflowers}
\label{ss:sunflo}

We introduce the following definition,  
inspired by description of the Howard algorithm in~\cite{Coc+} and \cite[Chapter 6]{HOW}. 

\begin{definition}
Let $\digr$ be a weighted graph. A subgraph $\tilde{\digr}$ of $\digr$ is called a {\rm sunflower subgraph} of $\digr$ if
the following conditions hold:
\begin{itemize}
\item[{\rm (i)}] If a node in $\digr$ has an outgoing edge then it has a unique 
outgoing edge in $\tilde{\digr}$; 
\item[{\rm (ii)}] Every edge in $\tilde{\digr}$ has the same weight as the corresponding edge
in $\digr$.
\end{itemize} 
\end{definition}

It is easy to see (\cite{HOW}) 
that such a digraph can be decomposed into several isolated components,
each of them either acyclic or consisting of a unique cycle and some walks leading to it. 
A sunflower subgraph $\tilde{\digr}$ of $\digr$ is called a simple $\gamma$-sunflower subgraph of $\digr$,
if $\gamma$ is the unique cycle of $\tilde{\digr}$.
The set of all sunflower subgraphs of the weighted digraph 
$\digr(B)$, with full node set $1,\ldots, n,$ will be denoted
by $\cS(B)$.

Denoting by $\mu(\digr)$ the maximal cycle mean of a subgraph $\digr\subseteq\digr(B)$, we introduce the
following parameters:
\begin{equation}
\label{e:mama}
M(B):=\max\limits_{\digr\in\cS(B)} \mu(\digr),\quad m(B):=\min\limits_{\digr\in\cS(B)} \mu(\digr).
\end{equation}



\begin{lemma}
\label{l:sunfl-exist} 
Let $\digr$ be a strongly connected graph. 
Then, for any cycle $\gamma$ of $\digr$ there exists a simple 
$\gamma$-sunflower subgraph of $\digr$. 
\end{lemma}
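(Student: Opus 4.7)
The plan is to build the simple $\gamma$-sunflower subgraph $\tilde{\digr}$ by specifying, for every node $u$ of $\digr$, a unique outgoing edge. Writing $V(\gamma)$ for the node set of the cycle $\gamma$, I would first include in $\tilde{\digr}$ all edges of $\gamma$ itself; this assigns to every $u \in V(\gamma)$ its unique outgoing edge (the one in $\gamma$). The work is then to extend this assignment to the remaining nodes.

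For each node $u \notin V(\gamma)$, I would use strong connectivity of $\digr$ to define $d(u)$ as the length of a shortest directed path from $u$ to $V(\gamma)$ in $\digr$; by strong connectivity this is a finite positive integer. Choose for each such $u$ an outgoing edge $(u,v)$ in $\digr$ with $d(v) = d(u) - 1$ (which exists by definition of a shortest path, with the convention $d(w) = 0$ for $w \in V(\gamma)$), and add it to $\tilde{\digr}$. By construction, $\tilde{\digr}$ satisfies condition (i) of the definition of a sunflower subgraph, and condition (ii) holds automatically since we inherit edge weights from $\digr$.

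It remains to verify that $\gamma$ is the unique cycle of $\tilde{\digr}$. Starting from any node $u \notin V(\gamma)$ and iterating the chosen outgoing edges, the value of $d$ strictly decreases at each step, so after finitely many steps we enter $V(\gamma)$; once inside $V(\gamma)$, the trajectory traces $\gamma$ forever. Consequently, no cycle of $\tilde{\digr}$ can contain a node outside $V(\gamma)$. Inside $V(\gamma)$, the only cycle is $\gamma$ itself, because each node of $V(\gamma)$ has its unique outgoing edge in $\tilde{\digr}$ dictated by $\gamma$. Hence $\tilde{\digr}$ is a simple $\gamma$-sunflower subgraph.

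The argument is almost entirely bookkeeping once the distance function $d$ is in place; the only conceptual point to be careful about is ensuring that following the chosen edges cannot create a spurious second cycle, and the strict monotonicity of $d$ along the chosen edges is exactly what rules this out. So the main (very mild) obstacle is making sure the choice of outgoing edges for nodes off $\gamma$ is done monotonically with respect to distance to $V(\gamma)$, rather than an arbitrary path to $\gamma$ which could in principle loop.
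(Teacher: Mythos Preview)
Your proof is correct. Both your argument and the paper's build $\tilde{\digr}$ by sending every node outside $V(\gamma)$ one step closer to $\gamma$, but the packaging differs. The paper argues by maximality: it starts with $\gamma$ as a simple $\gamma$-sunflower on $V(\gamma)$, assumes a maximal node set $M\supseteq V(\gamma)$ carries such a sunflower, and then uses strong connectivity to find a node just outside $M$ with an edge into $M$, contradicting maximality. Your version is a direct, all-at-once construction using the shortest-path distance $d(\cdot)$ to $V(\gamma)$ as a potential function; the strict decrease of $d$ along chosen edges is what certifies that no second cycle appears. The distance-function approach is arguably cleaner in that it makes the acyclicity of the part outside $\gamma$ immediate and gives an explicit recipe, whereas the paper's maximality argument is a pure existence proof. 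Either way the content is the same elementary observation that in a strongly connected digraph every node can be routed into a prescribed target set along a single outgoing edge.
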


\begin{proof} Let $\{1,\ldots,n\}$ be the nodes of $\digr$.
Suppose that $\{1,\ldots,k\}$ are the nodes in $\gamma$, and 
$k+1,...,n$ are the rest of the nodes. 
 
Observe first that we can construct a simple $\gamma$-sunflower on nodes $1,\ldots,k$:
this is just the cycle $\gamma$ itself.

The proof is by contradiction. Assume that a simple $\gamma$-sunflower $\Tilde{\digr}$ 
can be constructed for a subgraph 
induced by the set of nodes $M$, which 
contains the nodes $1,\ldots,k$ and is a {\bf proper} subset of 
$\{1,\ldots, n\}$, and that $M$ is a maximal such set. 
However, since $\digr$ is connected, 
there is a walk $W$ from $\{1,\ldots,n\}\backslash M$ to $M$, 
and we can pick the last edge of that walk and its last node before it enters $M$. 
Adding that node and that edge to $\Tilde{\digr}$ we increase it 
while it remains a simple $\gamma$-sunflower (of a subgraph induced by a larger node set).
The contradiction shows that we can construct a simple $\gamma$-sunflower of $\digr$.
\end{proof}

Let us also recall the following.

\begin{lemma}
\label{l:sunfl-rho}
Let $A$ be a nonnegative square matrix  
such that the digraph associated with $A$ is a sunflower graph. Then
$\rho(A)=\mu(A)$.
\end{lemma}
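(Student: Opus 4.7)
The plan is to invoke the Frobenius normal form of $A$ and use the strong structural restriction imposed by the sunflower hypothesis to pin down each diagonal block explicitly. Since $\mu(A) \leq \rho(A)$ for every nonnegative matrix (as noted in the paragraph preceding Theorem~\ref{t:perronvis}), the content is the reverse inequality $\rho(A) \leq \mu(A)$.

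First I would argue that each strongly connected component of $\digr(A)$ is either an isolated node with no self-loop, or a single directed cycle (possibly a loop). Indeed, every node in an SCC must have at least one outgoing edge, and by the sunflower property this edge is unique; iterating the unique outgoing edges starting at any node of the SCC produces a walk that must eventually close up into a cycle $\gamma$, and the uniqueness of outgoing edges prevents any node on $\gamma$ from leading outside $\gamma$. Strong connectivity then forces the entire SCC to coincide with $\gamma$. Consequently, in the Frobenius normal form of $A$ each trivial class is a $1\times 1$ zero block, and each nontrivial class, after reordering, is the cycle matrix of some cycle $(i_1,\ldots,i_\ell)$ with weights $w_k = a_{i_k i_{k+1}}$ (indices mod $\ell$).

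Next I would compute $\rho$ for such a cycle block: a direct calculation shows that its $\ell$-th power is a diagonal matrix with every diagonal entry equal to $w_1 w_2 \cdots w_\ell$, whence $\rho(A_i)^\ell = w_1 \cdots w_\ell$, that is, $\rho(A_i)$ equals the geometric mean of the cycle. Using the standard reducible formula $\rho(A) = \max_i \rho(A_i)$ (taken over the diagonal blocks in the Frobenius normal form) and observing that in a sunflower graph the cycles of $\digr(A)$ are precisely the unique cycles carried by the nontrivial SCCs, one concludes that $\rho(A) = \max_i \rho(A_i)$ equals the maximum of the geometric cycle means, which is exactly $\mu(A)$.

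The main potential obstacle is the structural claim that every SCC collapses to a single cycle; once this is established, the spectral identity $\rho(A_i) = (w_1\cdots w_\ell)^{1/\ell}$ for a cycle block and the matching with $\mu(A)$ are essentially one-line arguments.
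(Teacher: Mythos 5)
Your proof is correct and follows essentially the same route as the paper: reduce to the Frobenius normal form, observe that the nontrivial classes of a sunflower graph are single cycles, and compute the Perron root of a cycle block. The only (minor) difference is local — the paper computes $\rho$ of a cycle block by explicitly building a positive eigenvector from the relations $a_{ij}x_j=\mu(A)x_i$, while you observe that the $\ell$-th power of a weighted cycle matrix is $(w_1\cdots w_\ell)I$; both are one-line arguments and equally valid.
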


\begin{proof} Clearly, the cycles of $\digr(A)$ are exactly the 
nontrivial classes of the Frobenius Normal Form. Hence it suffices to observe
that $\rho(A)=\mu(A)$ if $\digr(A)$ is a Hamiltonian cycle $\gamma$. Indeed, we can
set $x_i=1$ for any $i\in\gamma$ and then
calculating all the rest of coordinates from the equalities $a_{ij}x_j=\mu(A) x_i$ for $a_{ij}\neq 0$. 
This computation does not lead to contradiction, since $\mu(A)$ is the cycle mean of $\gamma$.
\end{proof}

The following proposition expresses $m(B)$ and $M(B)$
in terms associated with the Frobenius normal form.

\begin{proposition}
\label{p:mama2}
Let $B$ be a nonnegative matrix. Then
\begin{equation}
\label{e:mama2}
M(B)=\mu(B),\quad m(B)=\max\limits_{N_i \text{is final}} \nu(B_i). 
\end{equation}
\end{proposition}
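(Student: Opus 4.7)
The proof naturally splits into the two equalities of \eqref{e:mama2}. For $M(B)=\mu(B)$, the inequality $M(B)\le\mu(B)$ is immediate since every cycle of a sunflower subgraph of $\digr(B)$ is a cycle of $\digr(B)$. For the reverse inequality, I would pick a cycle $\gamma$ of $\digr(B)$ with $\mu(\gamma)=\mu(B)$; such a $\gamma$ lies inside the strongly connected digraph of some nontrivial class $B_i$. Lemma~\ref{l:sunfl-exist} then produces a simple $\gamma$-sunflower of $\digr(B_i)$, which I extend to a sunflower $\tilde{\digr}$ of $\digr(B)$ by assigning an arbitrary outgoing edge to every remaining node of $\digr(B)$ that has one. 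Since $\gamma$ is a cycle of $\tilde{\digr}$, we have $\mu(\tilde{\digr})\ge\mu(\gamma)=\mu(B)$, giving $M(B)\ge\mu(B)$.

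For $m(B)=\max_{N_i\text{ final}}\nu(B_i)$, the lower bound is the easier direction. Take any sunflower $\tilde{\digr}\in\cS(B)$ and any nontrivial final class $B_i$. Each node of $B_i$ has an outgoing edge in $\digr(B)$ (because $\digr(B_i)$ is strongly connected and nontrivial) and therefore has out-degree exactly $1$ in $\tilde{\digr}$; since $B_i$ is final, the chosen edge stays inside $B_i$. Hence the restriction of $\tilde{\digr}$ to $B_i$ is a subgraph of $\digr(B_i)$ in which every node has out-degree $1$, and such a subgraph necessarily contains a cycle, whose mean is at least $\nu(B_i)$ by definition of $\nu$. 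Therefore $\mu(\tilde{\digr})\ge\nu(B_i)$ for every nontrivial final $N_i$, and the lower bound on $m(B)$ follows.

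For the matching upper bound I would construct one specific sunflower $\tilde{\digr}^{\ast}$ whose only cycles are anticritical cycles inside the final classes. In each nontrivial final class $B_i$ pick a cycle $\gamma_i^{\ast}$ with $\mu(\gamma_i^{\ast})=\nu(B_i)$ and, via Lemma~\ref{l:sunfl-exist}, build a simple $\gamma_i^{\ast}$-sunflower of $\digr(B_i)$. In each transient class $B_j$, assign outgoing edges so that no new cycle is created: let $T\subseteq B_j$ be the (nonempty, because $B_j$ is transient) set of nodes having at least one outgoing edge leaving $B_j$, give each $v\in T$ one such leaving edge, and for each $v\in B_j\setminus T$ use the strong connectedness of $\digr(B_j)$ to pick an outgoing edge whose head has strictly smaller BFS-distance to $T$. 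Iterating the assignment from any node of $B_j$ reaches $T$ in finitely many steps and then exits $B_j$, so the union of these local choices is a sunflower of $\digr(B)$ whose cycles are exactly the $\gamma_i^{\ast}$; this gives $\mu(\tilde{\digr}^{\ast})=\max_{N_i\text{ final}}\nu(B_i)$.

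I expect the main obstacle to be this last construction: arranging the outgoing-edge selections inside transient classes so that no spurious cycles arise while the sunflowers already fixed on the final classes are left untouched. The BFS-layering toward $T$ is the key device; trivial final classes (isolated zero-diagonal nodes) contribute nothing because they have no outgoing edges and hence never appear on a sunflower cycle, so they can safely be excluded from the maximum on the right-hand side of \eqref{e:mama2}.
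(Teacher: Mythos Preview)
Your proposal is correct and follows essentially the same approach as the paper: both directions of $M(B)=\mu(B)$ are argued identically, and for $m(B)$ the paper likewise observes that every sunflower must contain a cycle in each nontrivial final class (giving $m(B)\ge\max_{N_i\text{ final}}\nu(B_i)$) and then constructs a sunflower with cycles only in the final classes by taking simple $\gamma_i^\ast$-sunflowers there and routing each transient class acyclically toward an exit node. The only cosmetic difference is that the paper phrases the transient-class routing as ``build a spanning tree on the nodes of $B_i$ directed to $k_i$'' (a single chosen exit node), whereas you use a BFS-layering toward the full exit set $T$; both devices serve the same purpose and yield the same thin sunflower.
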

\begin{proof}

\medskip\noindent $M(B)$: It is obvious from~\eqref{e:mama} that $M(B)\leq\mu(B)$. The reverse inequality
$M(B)\geq \mu(B)$ follows since we can take a cycle $\alpha$ of $\digr(B)$ whose cycle mean equals to $\mu(B)$ and
construct a sunflower subgraph of $\digr(B)$  
that contains $\alpha$ as one of its cycles.

\medskip\noindent $m(B)$: It is obvious from~\eqref{e:mama} that $m(B)\geq\max\limits_{N_i \text{is final}} \nu(B_i)$,
since any sunflower subgraph of $B$ contains a cycle in every nontrivial final class. So we show that
$m(B)\leq\max\limits_{N_i \text{is final}} \nu(B_i)$.  For this, in each submatrix $B_i$ corresponding to
a final class we take a cycle $\alpha_i$ whose mean value is $\nu(B_i)$ and using Lemma~\ref{l:sunfl-exist} build a simple
$\alpha_i$ sunflower of the strongly connected component associated with $B_i$. Unite all these sunflowers. If $B_i$ is not final then
it has access to another class from some node $k_i$. In this case build a spanning tree on the nodes of $B_i$,
directed to $k_i$, and for $k_i$ choose an edge going to another class.  
Finally, for each trivial node of $B_i$ we choose an arbitrary outgoing edge if it exists. Adjoin these 
spanning trees and outgoing
edges to the above union of simple sunflowers. This leads to a sunflower subgraph $\digr$ of $\digr(B)$, for which we have
$\mu(\digr)=\max\limits_{N_i \text{is final}} \nu(B_i)$, hence $m(B)\leq\max\limits_{N_i \text{is final}} \nu(B_i)$ 
and the required equality follows. 
\end{proof}

\begin{remark}
Observe that $m(B)=0$ if and only if all final classes of $B$ are trivial.
\end{remark}

A sunflower subgraph which has cycles only in the final classes of $\digr(B)$ will be called {\em thin}.
In the proof of Proposition~\ref{p:mama2} we actually established the following result.

\begin{lemma}
Let $\digr$ be a graph where each node has an outgoing edge and let $\digr_i$ for 
$i=1,\ldots, q$ be the nontrivial final components of $\digr$.\\
For each collection of cycles $\alpha_i\in\digr_i$ for $i=1,\ldots, q$, there is a (thin) sunflower subgraph of
$\digr$ whose cycles are $\alpha_1,\ldots, \alpha_q$.\\
If all final components of $\digr$ are trivial then 
there exists an acyclic sunflower subgraph of $\digr$ (i.e., a directed forest).
\end{lemma}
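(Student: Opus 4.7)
The plan is to mimic the construction carried out inside the proof of Proposition~\ref{p:mama2}. The second assertion will follow as the special case $q=0$ of the first, so I focus on the first. (Note that under the hypothesis ``every node has an outgoing edge,'' a final component cannot in fact be trivial, since a trivial final class would consist of a single node with no outgoing edges at all; so the second assertion is vacuous unless one relaxes that hypothesis, in which case the construction below still produces an acyclic sunflower.)

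First I would apply Lemma~\ref{l:sunfl-exist} to each nontrivial final component $\digr_i$: since $\digr_i$ is strongly connected and $\alpha_i$ is a cycle inside it, there is a simple $\alpha_i$-sunflower $\tilde{\digr}_i$ spanning all nodes of $\digr_i$. Set $\tilde{\digr}_0 := \bigcup_{i=1}^q \tilde{\digr}_i$; this is a sunflower on the union of the node sets of all nontrivial final components, whose only cycles are exactly $\alpha_1,\ldots,\alpha_q$. I would then extend $\tilde{\digr}_0$ to cover the remaining nodes, all of which lie in transient strongly connected components. For each nontrivial transient component $C$, pick a node $k\in C$ having an outgoing edge that leaves $C$ (such a $k$ exists because $C$ is transient), pick one such edge, and adjoin to $\tilde{\digr}_0$ both this edge and a spanning in-tree of $C$ directed towards $k$. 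For each trivial transient node, simply adjoin a single outgoing edge (which necessarily leaves the component, and exists by hypothesis). All adjoined edges inherit their weights from $\digr$.

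Finally I would verify the two sunflower conditions and check that no unwanted cycle is created. The weight condition is automatic by construction. Every node of $\tilde{\digr}_i$ has a unique outgoing edge in $\tilde{\digr}_i$ by the sunflower property, and every other node receives exactly one outgoing edge during the extension, so the unique-outgoing-edge condition holds. The only cycles of the resulting subgraph must be $\alpha_1,\ldots,\alpha_q$: indeed, the edges added at a transient component $C$ that leave $C$ point to strictly lower components in the condensation DAG of $\digr$, so no cycle can cross component boundaries; inside each nontrivial transient component the added edges form a directed tree and create no cycle; and inside $\tilde{\digr}_i$ the unique cycle is $\alpha_i$ by construction. The one place requiring care, and the main obstacle of the argument, is ensuring that the extension step does not accidentally close a new cycle; this is handled cleanly by the combination of spanning in-trees within each transient component and the topological order on strongly connected components coming from the condensation DAG.
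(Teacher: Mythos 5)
Your construction coincides with the paper's: the lemma is stated with no separate proof, pointing back to the proof of Proposition~\ref{p:mama2}, where exactly your recipe is executed (simple $\alpha_i$-sunflowers in the nontrivial final components via Lemma~\ref{l:sunfl-exist}, spanning in-trees directed towards escape nodes in the transient components, and single outgoing edges for trivial nodes). Your remark that the second assertion is vacuous under the hypothesis that every node has an outgoing edge is correct and is a point the paper leaves unremarked, and your verification via the condensation DAG that no extra cycle is created usefully spells out what the paper leaves implicit.
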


\subsection{Range of the Perron root}
\label{ss:nonneg}

For a row uniform nonnegative matrix $B$, denote 
\begin{equation}
\label{e:etadef}
\eta(B):=\{\rho(A)\colon A\in\Rpnn,\ \Aux(A)=B\}.
\end{equation}

We are going to extend Theorem~\ref{t:aevdd} to include the reducible case and describe
$\eta(B)$ for a general row uniform nonnegative matrix $B$.

\begin{theorem}
\label{t:mainres}
Let $B$ be a nonnegative row uniform matrix.
\begin{itemize}
\item[(i)] $\eta(B)\subseteq [m(B),M(B)]$.
\item[(ii)] $M(B)\in\eta(B)$ if and only if there is at least one final class $B_i$ with
$\mu(B_i)=\nu(B_i)=M(B)$.
\item[(iii)] If $m(B)>0$ then $m(B)\in\eta(B)$ if and only if $\mu(B_i)=\nu(B_i)=m(B)$ for all final 
(nontrivial) classes $N_i$
attaining the maximum in~\eqref{e:mama2}. If $m(B)=0$ then $m(B)\in\eta(B)$ of and only if $\digr(B)$ is acyclic,
in which case $\eta(B)=\{0\}$.
\item[(iv)] If $M(B)=m(B)$ then $\eta(B)=\{m(B)\}$.
\item[(v)] If $M(B)>m(B)$ 
then $(m(B),M(B))\subseteq\eta(B)$.
\end{itemize}
\end{theorem}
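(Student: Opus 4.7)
The plan is to establish the five statements in the order (i), (iv), (ii), (v), (iii). Throughout I work with the Frobenius normal form of $A$, writing $A_k$ for the $k$-th diagonal block, and use the elementary observation that $\Aux(A_k)\le B_k$ entrywise, with equality precisely when the class $N_k$ is final (no rows of $A$ indexed by $N_k$ have entries outside $N_k$). The two workhorses will be Theorem~\ref{t:aevdd} applied blockwise and the sunflower material of Section~\ref{ss:sunflo}. Part (i) then follows at once: the upper bound from $\rho(A)=\max_k\rho(A_k)\le\max_k\mu(\Aux(A_k))\le\max_k\mu(B_k)\le M(B)$, and the lower bound from $\rho(A)\ge\rho(A_i)\ge\nu(\Aux(A_i))=\nu(B_i)$ for each final $N_i$, maximised over final classes via Proposition~\ref{p:mama2}. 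Part (iv) is immediate because the admissible set $\cA(B):=\{A\in\Rpnn\colon\Aux(A)=B\}$ is nonempty (split every row sum equally among the nonzero positions).

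For the ``only if'' parts of (ii) and (iii) I chase equality through the above chains. In (ii), if a class $N_k$ attains $\rho(A_k)=M(B)$ then $\rho(A_k)=\mu(\Aux(A_k))=\mu(B_k)=M(B)$; Theorem~\ref{t:aevdd} turns the first equality into $\nu(\Aux(A_k))=\mu(\Aux(A_k))$, and combining this with $\Aux(A_k)\le B_k$ forces every cycle mean of $B_k$ to equal $M(B)$ and $\Aux(A_k)=B_k$ on every edge lying in a cycle of $B_k$. Since $B_k$ is a nontrivial strongly connected component, every edge lies in some cycle, so $\Aux(A_k)=B_k$ throughout, $N_k$ is final, and $\mu(B_k)=\nu(B_k)=M(B)$. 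The ``only if'' of (iii) with $m(B)>0$ is parallel: on each maximising final $N_i$ one has $\rho(A_i)\ge\nu(B_i)=m(B)\ge\rho(A)\ge\rho(A_i)$, hence $\rho(A_i)=\nu(B_i)$, which by Theorem~\ref{t:aevdd} forces $\mu(B_i)=\nu(B_i)$. The $m(B)=0$ case of (iii) is routine: an acyclic $\digr(B)$ renders every admissible $A$ nilpotent, while a cycle in $\digr(B)$ gives every admissible $A$ positive geometric mean on that cycle. Finally, the ``if'' direction of (ii) is the matching construction: any equal split of row sums already produces $\rho(A_i)=M(B)$ on the distinguished final block by Theorem~\ref{t:aevdd}(i), and $\rho(A)=M(B)$ then follows from (i).

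For (v) I would use that $\cA(B)$ is a product of simplices (one per nonzero row), hence connected, so by continuity of $\rho$ the set $\eta(B)$ is a subinterval of $[m(B),M(B)]$. It therefore suffices to produce admissible $A^{(+)}_\eps,A^{(-)}_\eps$ with $\rho(A^{(+)}_\eps)\to M(B)$ and $\rho(A^{(-)}_\eps)\to m(B)$. Choose sunflower subgraphs $\tilde{\digr}^{+},\tilde{\digr}^{-}$ achieving these two values (existence from~\eqref{e:mama} and, for $m(B)$, the thin sunflower supplied in the proof of Proposition~\ref{p:mama2}). For every nontrivial row $k$ place mass $b_k-(d_k-1)\eps$ on the unique sunflower out-edge and $\eps$ on each of the remaining $d_k-1$ nonzero positions of row $k$, where $d_k$ is the out-degree of $k$ in $\digr(B)$. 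This preserves $\Aux(A^{(\pm)}_\eps)=B$; in the limit $\eps\to 0$ the matrix is supported on the sunflower, so by Lemma~\ref{l:sunfl-rho} its Perron root equals $\mu(\tilde{\digr}^{\pm})$, and continuity of $\rho$ finishes the argument.

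The main obstacle is the ``if'' direction of (iii) with $m(B)>0$, because $\rho(A_k)$ must be kept at most $m(B)$ on every class simultaneously while all row sums of $B$ are preserved. I would construct $A$ class by class. On each good final class $N_i$ (with $\mu(B_i)=\nu(B_i)=m(B)$) any row-sum-preserving split gives $\rho(A_i)=m(B)$ by Theorem~\ref{t:aevdd}(i). On a non-maximising final class $N_j$ with $\mu(B_j)>m(B)>\nu(B_j)$, the already-proven part (v) applied to the irreducible $B_j$ supplies $A_j$ with $\rho(A_j)=m(B)$; if $\mu(B_j)\le m(B)$ any split works. On a non-final class $N_k$ I concentrate row mass along the thin sunflower $\tilde{\digr}^{-}$, which has no cycle inside $N_k$, so the block $A_k$ converges as $\eps\to 0$ to a forest (hence nilpotent) matrix, and $\rho(A_k)$ drops below $m(B)$ for small enough $\eps$. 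The row-sum constraints decouple across classes because each $b_k$ is distributed entirely among the nonzero positions of its own row, so assembling these per-class choices yields $A\in\cA(B)$ with $\rho(A)=\max_k\rho(A_k)=m(B)$.
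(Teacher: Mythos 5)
Your proof is correct and follows essentially the same approach as the paper: blockwise application of Theorem~\ref{t:aevdd} to the Frobenius normal form, sunflower subgraphs to realise the endpoints $m(B)$ and $M(B)$, and continuity of $\rho$ on the convex admissible set $\cA(B)$ for the interior of the interval. Your tactical variations — chasing equality through $\Aux(A_k)\le B_k$ in (ii) rather than comparing against a strictly larger admissible matrix, and proving (v) before (iii) so it can be invoked blockwise on the non-maximising final classes where the paper instead uses a single global interpolation from the thin-sunflower matrix — are sound and lead to the same result.
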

\begin{proof} 
Throughout the proof, let $A$ be such that $\Aux(A)=B$. Let $A_i$ and $B_i$ for $i=1,\ldots,m$ be the classes of the Frobenius normal form of $A$ and $B$
respectively, and let $N_i$ be the corresponding node sets (or classes).

\medskip\noindent (i): We have to show that $\rho(A)\in[m(B),M(B)]$. Note that for any class $A_i$ of $A$ we have
$\Aux(A_i)\leq B_i$, and Theorem~\ref{t:aevdd} implies that $\rho(A_i)\leq\mu(B_i)$, 
but we do not have $\rho(A_i)\geq\nu(B_i)$ in general. However, 
$\Aux(A_i)=B_i$ holds for a final class, and hence $\nu(B_i)\leq\rho(A_i)\leq\mu(B_i)$ for 
any final class.

With above considerations, the inequality $\rho(A)\leq M(B)$ follows since $M(B)=\mu(B)$ and $\rho(A_i)\leq\mu(B_i)$
for all classes. To show that $\rho(A)\geq m(B)$ we first define matrix $\Tilde{A}$ formed from $A$ by
zeroing out all the entries except for the entries in final classes, and we similarly define $\Tilde{B}=\Aux(\Tilde{A})$. 
Then we have $\rho(A)\geq\rho(\Tilde{A})$ by 
monotonicity of the spectral radius.  Since $m(B)=\max\limits_{N_i \text{is final}} \nu(B_i)$ by~\eqref{e:mama2}
and $\rho(A_i)=\rho(\Tilde{A}_i)\geq\nu(\Tilde{B}_i)=\nu(B_i)$ for each final class we obtain
that $\rho(A)\geq\rho(\Tilde{A})\geq m(B)$,
hence the claim.

\medskip\noindent (ii): When $\digr(B)$ is acyclic the proof of (ii)
is trivial. If $\digr(B)$ is not
acyclic then $M(B)=\mu(B)>0$ and if $\rho(A_i)=\mu(B)$ then $A_i$ must be nontrivial.
 We first argue that $\rho(A_i)=M(B)$ is impossible if $A_i$ has access to other classes.
Indeed, if there is such access then we only have 
$\Aux(A_i)\leq B_i$ with strict inequalities
in some rows. This implies that we can find $A'_i$ such that $\Aux(A'_i)=B_i$ 
and $A'_i\geq A_i$,
with strict inequalities in the same rows. But then we have $\rho(A_i)<\rho(A'_i)\leq\mu(B_i)$
so $\rho(A_i)=\mu(B)$ is impossible. Thus $\rho(A_i)=\mu(B)$ can be attained only in a final class, which happens if
and only if $\rho(A_i)=\mu(B_i)$, and by Theorem~\ref{t:aevdd}, if and only if 
$\mu(B_i)=\nu(B_i)=\mu(B)$ for one such class.

\medskip\noindent (iii): In the case when $m(B)=0$ but $B$ has at least one nontrivial class 
$B_i$, we have $\rho(A_i)>0$ and hence $\rho(A)>0$ for any $A$ such that $\Aux(A)=B$.  Therefore in this case
$m(B)=0\in\eta(B)$ if and only if all classes of $B$ are trivial (i.e., $\digr(B)$ is acyclic). 

If $m(B)>0$, we first show that the given condition is necessary: if $\nu(B_i)<\mu(B_i)$ for at least one of these final classes then
we have $\nu(B_i)<\rho(A_i)<\mu(B_i)$ by Theorem~\ref{t:aevdd}, hence $m(B)<\rho(A_i)\leq\rho(A)$, so $m(B)=\rho(A)$
does not hold. Following the proof of Proposition~\ref{p:mama2},
we can construct a thin sunflower subgraph of $\digr(B)$ with the cycles attaining
$\nu(B_i)$ in all final classes. Denote
the matrix associated with this subgraph by $C$ and the submatrices extracted from the node sets $N_i$ by $C_i$.
For each submatrix $C_i$ with $N_i$ not final, we have $\rho(C_i)=0$. By the continuity of Perron root
we can find a small enough $\epsilon$ such that 
$\rho((1-\epsilon)C_i+\epsilon A_i)$ is smaller than $m(B)$ for all classes that are not final and for
all classes that are final but have $\nu(B_i)<m(B)$. This is while $\Aux((1-\epsilon)C+\epsilon A)=B$ and, by 
Theorem~\ref{t:aevdd}, $\rho((1-\epsilon)C_i+\epsilon A_i)=m(B)$ for all classes where the maximum in~\eqref{e:mama2} is attained.
This implies that $\rho((1-\epsilon)C+\epsilon A)=m(B)$, hence the claim. 

\medskip\noindent (iv): By part (i), $\rho(A)$ can be only equal to $m(B)=M(B)$. However, the set of $A$ such that
$\Aux(A)=B$ is nonempty for any row uniform $B$, hence the claim. 

\medskip\noindent (v): Let us first observe that by definition of $m(B)$ and $M(B)$~\eqref{e:mama}, there exist 
matrices $\underline{A}$ and $\overline{A}$ whose associated graphs are the sunflower subgraphs of
$\digr(B)$ attaining the maximum and the minimum value of $\mu(\digr)$ over all possible sunflower subgraphs
of $\digr(B)$.
By Lemma~\ref{l:sunfl-rho} we have that $\rho(\underline{A})=m(B)$ and $\rho(\overline{A})=M(B)$.

Now we argue that there exists $A_0$ with $\Aux(A_0)=B$ and $\rho(A_0)$
arbitrarily close to $m(B)=\rho(\underline{A})$. Indeed, let $D$ be any matrix with $\Aux(D)=B$, and consider
the family of matrices $C_{\epsilon}=(1-\epsilon)\underline{A}+\epsilon D$ for $\epsilon>0$.
Then (for any $\epsilon>0$) we have $\Aux(C_{\epsilon})=B$ and since
$\rho(C_{\epsilon})$ is a continuous function of $\epsilon$ it
follows that $\lim_{\epsilon\to 0} \rho(C_{\epsilon})=\rho(\underline{A})$.
Similarly, there exists 
$A_1$ with $\Aux(A_1)=B$ and $\rho(A_1)$
arbitrarily close to $M(B)=\rho(\overline{A})$. 

Thus for each $\epsilon$ we have some $A_0$ and $A_1$ with 
$\Aux(A_0)=\Aux(A_1)=B$ and $\rho(A_0)<m(B)+\epsilon$
and $\rho(A_1)>M(B)-\epsilon$. For $\lambda$, where $0<\lambda<1$, let
$A_{\lambda}:=\lambda A_1+ (1-\lambda)A_0$ interpolate between $A_0$ and $A_1$.
Since $\Aux(A_{\lambda})=B$ for each $\lambda$ 
and $\rho(A_{\lambda})$ is continuous in $\lambda$, the claim follows.
\end{proof}

As an immediate corollary we obtain the following result in the irreducible case.

\begin{corollary}
\label{c:mainres}
Let $B$ be an irreducible nonnegative row uniform matrix. Then
\begin{itemize}
\item[(i)] 
If $\nu(B) < \mu(B)$ then $\eta(B) = (\nu(B), \mu(B))$ .
\item[(ii)] 
If $\nu(B) = \mu(B)$ then $\{\nu(B)\} = \eta(B) =  \{\mu(B)\}$.
\end{itemize}
\end{corollary}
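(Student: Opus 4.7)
The plan is to derive this corollary by specializing each of the five parts of Theorem~\ref{t:mainres} to the irreducible case and then combining them. The strategy is straightforward because irreducibility collapses most of the reducibility bookkeeping: when $B$ is irreducible, its Frobenius normal form consists of a single class, namely $B$ itself, and that class is necessarily final and (under the standing assumption that $\digr(B)$ contains at least one cycle) nontrivial.

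The first step is to evaluate $M(B)$ and $m(B)$ via Proposition~\ref{p:mama2}. Since $B$ is the only class and is final, we get $M(B)=\mu(B)$ and $m(B)=\nu(B)$. Thus Theorem~\ref{t:mainres}(i) immediately gives $\eta(B)\subseteq[\nu(B),\mu(B)]$, which is the basic containment we will need for both cases.

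For part (ii) of the corollary (the equality case), I would invoke Theorem~\ref{t:mainres}(iv): since $M(B)=m(B)$ amounts exactly to $\mu(B)=\nu(B)$, we obtain $\eta(B)=\{m(B)\}=\{\nu(B)\}=\{\mu(B)\}$, and nonemptiness is guaranteed because $\Aux(B)=B$, so $B$ itself realizes this value.

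For part (i) of the corollary (the strict case $\nu(B)<\mu(B)$), I would combine three observations. First, Theorem~\ref{t:mainres}(v) yields the open interval inclusion $(\nu(B),\mu(B))\subseteq\eta(B)$. Second, the upper endpoint $\mu(B)$ is excluded: Theorem~\ref{t:mainres}(ii) requires a final class $B_i$ with $\mu(B_i)=\nu(B_i)=M(B)$, but the unique (final) class here is $B$ itself with $\nu(B)<\mu(B)$. Third, the lower endpoint $\nu(B)$ is excluded by Theorem~\ref{t:mainres}(iii): if $\nu(B)>0$, the criterion again demands $\mu(B)=\nu(B)$ on the unique final class, which fails; and if $\nu(B)=0$ the criterion demands that $\digr(B)$ be acyclic, which contradicts irreducibility together with the presence of cycles. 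Together with the containment $\eta(B)\subseteq[\nu(B),\mu(B)]$ from part~(i), this pins down $\eta(B)=(\nu(B),\mu(B))$.

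There is no genuine technical obstacle beyond careful bookkeeping; the only place to be mildly cautious is the endpoint exclusion in the strict case, where one must verify separately the two sub-cases $\nu(B)>0$ and $\nu(B)=0$ of Theorem~\ref{t:mainres}(iii). Everything else is a direct specialization of the main theorem.
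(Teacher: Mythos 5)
Your proof is correct and proceeds exactly as the paper intends (the paper simply declares this an ``immediate corollary'' of Theorem~\ref{t:mainres}, and your specialization---using Proposition~\ref{p:mama2} to get $M(B)=\mu(B)$, $m(B)=\nu(B)$, then invoking parts (i)--(v)---is the intended argument). One minor remark: in part (i) the subcase $\nu(B)=0$ is in fact vacuous rather than an exclusion case, since an irreducible $B$ satisfying $\nu(B)<\mu(B)$ necessarily has a cycle, and every cycle of a nonnegative matrix has positive geometric mean, forcing $\nu(B)>0$; but your conclusion is unaffected.
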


{\bf Example.}
Given an irreducible row uniform matrix $B\in\Rpnn$
and a constant $\rho\in(\nu(B);\mu(B)) = (m(B);M(B))$, we describe a method for constructing a
matrix $A$ such that $\Aux(A) = B$ and $\rho(A) =\rho$. Take two simple $\gamma$ -
sunflowers: one where $\gamma$  has cycle mean equal to $\mu(B)$, and the other
where $\gamma$ has cycle mean equal to $\nu(B)$.
Denote by $A_1$ the matrix associated with the first sunflower, and by $A_2$ the matrix associated
with the second sunflower. We have $\rho(A_1)=\mu(B)$ 
and $\rho(A_2)=\nu(B)$. For the convex combinations of these matrices,
we have that $\rho(A_{\lambda})$, where $A_{\lambda}:=(1-\lambda) A_1+\lambda A_2$ 
and $0\leq\lambda\leq 1$, will assume all values between
$\nu(B)$ and $\mu(B)$. This follows from the continuity of spectral radius 
as a function of $\lambda$(as in the more general construction above).
The value of $\lambda$ for which $\rho(A_{\lambda})=\rho$, can be found from the system
$A(\lambda)x=\rho x$, which has $n+1$ variables 
($n$ components of $x$ and the parameter $\lambda$). 
However, since $x$ can be multiplied by any scalar,
one of the coordinates of $x$ can be chosen equal to $1$. 
Then, for at least one of such choices, the existence of solution is guaranteed.

For example, consider

\begin{equation}
B=
\begin{pmatrix}
0 & 8 & 8 & 0 & 8\\
2 & 0 & 0 & 2 & 2\\
2 & 0 & 0 & 0 & 0\\
0 & 3 & 0 & 3 & 3\\
0 & 3 & 0 & 0 & 0
\end{pmatrix}
\end{equation}

We see that the cycle $(1,2)$ is critical, with the cycle mean $\mu(B)=4$, and the cycle $(2,5)$ is anticritical
with the cycle mean $\nu(B)=\sqrt{6}$. For the matrices $A_1$ and $A_2$ assiciated with the corresponding
sunflower graphs, we can take
\begin{equation}
A_1=
\begin{pmatrix}
0 & 8 & 0 & 0 & 0\\
2 & 0 & 0 & 0 & 0\\
2 & 0 & 0 & 0 & 0\\
0 & 3 & 0 & 0 & 0\\
0 & 3 & 0 & 0 & 0
\end{pmatrix},\quad
A_2=
\begin{pmatrix}
0 & 8 & 0 & 0 & 0\\
0 & 0 & 0 & 0 & 2\\
2 & 0 & 0 & 0 & 0\\
0 & 3 & 0 & 0 & 0\\
0 & 3 & 0 & 0 & 0
\end{pmatrix}
\end{equation}

Equation $A_{\lambda}x=\rho x$, where we put $x_1=1$, can be written as
\begin{equation}
\label{e:example}
\begin{pmatrix}
0 & 8 & 0 & 0 & 0\\
2-y & 0 & 0 & 0 & y\\
2 & 0 & 0 & 0 & 0\\
0 & 3 & 0 & 0 & 0\\
0 & 3 & 0 & 0 & 0
\end{pmatrix}
\begin{pmatrix}
1\\
x_2\\
x_3\\
x_4\\
x_5
\end{pmatrix}
=\rho
\begin{pmatrix}
1\\
x_2\\
x_3\\
x_4\\
x_5
\end{pmatrix},
\end{equation}
where $y\in[0,2]$ (so that $y=2\lambda$).  
Observe that $A_{\lambda}$ is irreducible, so the existence of a solution with $x_1=1$ 
(as well as a solution with
any other component set to $1$) is guaranteed.

System~\eqref{e:example} can be solved explicitly. Indeed, 
from the first equation of this system we have $8x_2=\rho$
so $x_2=\rho/8$, from the third equation we have $2= \rho x_3$ so 
$x_3=2/\rho$, from the fourth and the fifth equation we have
$3x_2=3\rho/8=\rho x_4=\rho x_5$ so $x_4=3/8=x_5$. Using 
the second equation of the system, we obtain $2-y+(3/8)y=\rho x_2=(\rho^2)/8$.
Thus $y=\frac{16-\rho^2}{5}$.

\section{Complex matrices}

In Section~\ref{ss:complex} we characterize the set of eigenvalues of complex matrices with prescribed graph
and prescribed row sums of the moduli of their entries. 

In Section~\ref{ss:camhoff}, a new proof of the Camion-Hoffman theorem is presented.

\subsection{Complex matrices with prescribed row sums of moduli}
\label{ss:complex}

\begin{definition}
\label{def:sigma}
For $B$ a row uniform nonnegative matrix,  let $\sigma(B)$ denote the set 
\newline
 $\sigma(B)=\left\{\lambda \colon\exists A\in\C^{n\times n},\ \Aux(|A|) = B,\ \det(A-\lambda I)=0 \right\}$.
\end{definition}
Here $|A|$ denotes the matrix whose entries are the moduli of (complex) entries of $A$.


We first consider the conditions when $0\in\sigma(B)$. In what follows, the imaginary number ``$i$''
is denoted by $\Im$. By a {\em generalized diagonal product} of $B$ we mean a product of the form
$\prod_{i=1}^n b_{i\sigma(i)}$ where $\sigma$ is an arbitrary permutation of $\{1,\ldots,n\}$.

\begin{theorem}
\label{t:zero}
Let $B$ be a row uniform nonnegative matrix. Then the following are equivalent: 
\begin{enumerate}
	\item[{\rm (i)}] $0\in\sigma(B)$.
	\item[{\rm (ii)}] The number of generalized nonzero diagonal products of $B$ is not $1$. 
\end{enumerate}
\end{theorem}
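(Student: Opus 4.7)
For any $A$ with $\Aux(|A|)=B$, the matrix $A$ has exactly the same zero/nonzero pattern as $B$, so in the Leibniz expansion $\det(A)=\sum_\sigma\operatorname{sgn}(\sigma)\prod_i a_{i\sigma(i)}$ the permutations $\sigma$ contributing nonzero terms are exactly those for which the generalized diagonal product $\prod_i b_{i\sigma(i)}$ is nonzero. This settles both easy halves of the biconditional at once: if no such $\sigma$ exists then $\det(A)=0$ identically, and if exactly one $\sigma_0$ exists then $\det(A)=\operatorname{sgn}(\sigma_0)\prod_i a_{i\sigma_0(i)}\neq 0$, because each factor is strictly positive in modulus. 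It therefore suffices to show that whenever at least two nonzero generalized diagonal products exist, some $A$ with $\Aux(|A|)=B$ is singular.

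Fix two such permutations $\sigma_1,\sigma_2$. Should $\sigma_2\sigma_1^{-1}$ decompose into several nontrivial cycles, pick one cycle $C$ of length $k\geq 2$ and replace $\sigma_2$ by the permutation obtained from $\sigma_1$ by applying only $C$; this is still a nonzero diagonal product of $B$, and now $\sigma_2\sigma_1^{-1}$ is the single cycle $C$. After relabeling rows and columns of $B$ independently (which preserves the problem), we may assume $\sigma_1=\mathrm{id}$ and $\sigma_2=(1\;2\;\cdots\;k)$, so $\sigma_2(i)=i+1\pmod k$ for $i\leq k$ and $\sigma_2(i)=i$ for $i>k$. Let $r_i$ be the common nonzero value in row $i$ of $B$, and define $A_0$ supported only on $\{(i,i):1\leq i\leq n\}\cup\{(i,i+1\bmod k):1\leq i\leq k\}$ by $|a^{(0)}_{ii}|=r_i$ for $i>k$, $|a^{(0)}_{ii}|=|a^{(0)}_{i,i+1\bmod k}|=r_i/2$ for $i\leq k$, all phases zero except a free phase $e^{i\phi}$ placed on $a^{(0)}_{12}$. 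A short combinatorial check shows that the only permutations $\sigma$ of $\{1,\ldots,n\}$ with $\sigma(i)=i$ for $i>k$ and $\sigma(i)\in\{i,i+1\bmod k\}$ for $i\leq k$ are $\sigma_1$ (all rows ``stay'') and $\sigma_2$ (all rows ``advance''), so only these contribute to $\det(A_0)$, yielding $\det(A_0)=Q\bigl(1+(-1)^{k-1}e^{i\phi}\bigr)$ for a positive constant $Q$. Choosing $\phi=\phi_0$ with $e^{i\phi_0}=(-1)^k$ gives $\det(A_0)=0$.

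The main obstacle is that this $A_0$ lies only on the boundary of the admissible set: entries $(i,j)\in\operatorname{supp}(B)\setminus\operatorname{supp}(A_0)$ are zero in $A_0$ instead of strictly positive, so $\Aux(|A_0|)\neq B$ in general. To push $A_0$ inside while keeping $\det=0$, introduce a two-parameter perturbation $A(t,\phi,\varepsilon)$: set $|a_{ij}|=\varepsilon$ on each extra support entry, compensate the big entries so the row sums remain $r_i$, and let a real parameter $t$ near $1/2$ control the split between $|a_{ii}|$ and $|a_{i,\sigma_2(i)}|$ in rows $i\leq k$. For $\varepsilon>0$ this family satisfies $\Aux(|A(t,\phi,\varepsilon)|)=B$. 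At the base point $(t,\phi,\varepsilon)=(1/2,\phi_0,0)$ a direct calculation gives $\partial\det/\partial t$ a nonzero real number and $\partial\det/\partial\phi$ a nonzero purely imaginary number, so the Jacobian of $(t,\phi)\mapsto(\operatorname{Re}\det,\operatorname{Im}\det)$ is nonsingular. The implicit function theorem then supplies continuous $t(\varepsilon),\phi(\varepsilon)$ with $\det(A(t(\varepsilon),\phi(\varepsilon),\varepsilon))=0$ for all small $\varepsilon>0$, and the resulting matrix witnesses $0\in\sigma(B)$.
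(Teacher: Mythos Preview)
Your argument is correct, but it proceeds along a genuinely different line from the paper's proof.

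The paper handles the ``two or more diagonals'' case without any perturbation. After a column permutation putting one nonzero diagonal on the main diagonal, the Frobenius normal form of $BP$ must contain an irreducible diagonal block $M$ of size at least $2$ with all diagonal entries nonzero; in particular every node $k\in M$ has out-degree $n_k\geq 2$ within $M$. The paper then picks the \emph{row-uniform} $D$ with $\Aux(D)=BP$ and multiplies the entries $d_{kl}$ for $k,l\in M$ by the $n_k$-th roots of unity $\exp(2\pi \Im\, t_k(l)/n_k)$. Because all nonzero entries in row $k$ of $D$ are equal, each such row of $C_{MM}$ sums to zero, so $C_{MM}$ (and hence $C$) is singular, with $\Aux(|C|)=BP$ exactly. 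Row uniformity is used in an essential way here to make the roots-of-unity trick land on the nose.

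Your approach instead isolates two diagonals differing by a single $k$-cycle, builds a matrix supported only on those two diagonals whose determinant is a sum of two cancelling terms, and then invokes the implicit function theorem to push this boundary point into the interior of the constraint set $\{\Aux(|A|)=B\}$ while keeping $\det=0$. The Jacobian computation you sketch is correct (the $t$-derivative is real and nonzero, the $\phi$-derivative purely imaginary and nonzero at the base point), so the IFT applies.

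What each buys: the paper's argument is shorter, entirely elementary, and exploits the row-uniform hypothesis directly. Your argument uses more machinery but is essentially a statement about zero/nonzero patterns: nothing in your perturbation step depends on the specific row-sum values, so the same proof would establish the analogous result for any class of matrices with prescribed support and prescribed (positive) row sums of moduli, not just those arising from a row-uniform $B$.
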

\begin{proof}
Suppose the number of generalized nonzero diagonal products of $B$ is one. 
Let $A$ be such that $\Aux(A)=B$. 
The determinant of $A$ equals the signed sum of the nonzero 
generalized
diagonal products of $A$. Since all but one of the generalized diagonal products of $A$ are zero, we have $\det(A)\neq 0$. 
Thus $0\notin\sigma(B)$.  

Suppose that $B$ has no generalized nonzero diagonal products and 
let $A$ be such that $\Aux(A)=B$. 
Since all the generalized diagonal products of $A$ are zero, we have $\det(A)= 0$ and $0\in\sigma(B)$.    

Suppose that $B$ has two or more non zero generalized diagonal products.  
Let us permute the columns of $B$ in order to put one of the generalized diagonal products
on the (main) diagonal. In other words, consider $BP$ where $P$ is a permutation matrix and all diagonal entries of
$BP$ are nonzero.
We have $\Aux(A)=B$ if and only if $\Aux(AP)=BP$, and $\det(A)=\det(AP)$, therefore $0\in\sigma(B)$ if and only if
$0\in\sigma(BP)$. As $BP$ has at least one nonzero diagonal product different from the main diagonal, 
the Frobenius normal form of $BP$ has a nontrivial diagonal block of dimension greater than $1$. 

Denote the index set of that block by $M$, and let us take any row uniform nonnegative matrix $D=(d_{kl})$ such that 
$\Aux(D)=BP$. For each $k\in M$, denote by $n_k$ the number of outgoing edges of the $k$th node in $M$ in the
associated digraph of $BP$
that go to the nodes in $M$. As the block is irreducible and has all diagonal entries nonzero, we have $n_k>1$.  
Let $t_k$ be a bijection between the outgoing edges of $k$ and $\{1,2,\ldots n_k\}$, and define
matrix $C=(c_{kl})$ by  

\begin{equation}
c_{kl}=
\begin{cases}
  d_{kl} \exp(\Im \frac{t_k(l)2\pi}{n_k}), & \text{if $k,l\in M$ and $d_{kl}\neq 0$}\\
  d_{kl}, & \text{otherwise}.
\end{cases}
\end{equation}

Then $\Aux(|C|)=BP$. In addition $C_{MM}v=0$  where $C_{MM}$ is the principal submatrix of $C$ 
extracted from rows and columns with indices in $M$, and $v$ is the vector with all components 
equal to $1$. This implies that $\det(C)=\det(C_{MM})=0$ so $0\in\sigma(BP)$ and $0\in\sigma(B)$.


\end{proof}

We now describe $\sigma(B)\backslash\{0\}$ starting from the
irreducible case.

\begin{definition}
An irreducible matrix $B$ is called unicyclic if $\digr(B)$ consists of a 
single Hamiltonian cycle, and multicyclic otherwise. 
\end{definition}

\begin{theorem}\label{t:crhorange-irred}
Let $B$ be a row uniform nonnegative irreducible matrix. 
\begin{itemize}

\item[{\rm (i)}] If $B$ is unicyclic then $\sigma(B)=\{s\colon |s|=\mu(B)\}$;  
\item[{\rm (ii)}] If $B$ is multicyclic and $\nu(B)<\mu(B)$ then\\
$\sigma(B)\setminus\{0\}=\{ s\colon 0<|s|<\mu(B)\}$;
\item[{\rm (iii)}] If $B$ is multicyclic and $\nu(B)=\mu(B)$ then\\
$\sigma(B)\setminus\{0\}=\{ s\colon 0<|s|\leq\mu(B)\}$.
\end{itemize}
\end{theorem}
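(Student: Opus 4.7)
The plan is to handle each of the three cases by proving both the inclusion $\sigma(B)\setminus\{0\} \subseteq$ (claimed set) and its reverse. The easy direction follows uniformly: if $\lambda$ is an eigenvalue of $A$ with $\Aux(|A|)=B$ then $|\lambda|\leq\rho(|A|)$, and Theorem~\ref{t:aevdd} gives $\rho(|A|) < \mu(B)$ when $\nu(B) < \mu(B)$ and $\rho(|A|) = \mu(B)$ when $\nu(B) = \mu(B)$. This yields the three upper bounds: $|s|=\mu(B)$ in case~(i) (unicyclic $B$ has $\mu(B)=\nu(B)$, and the argument refines to equality), strict $|s|<\mu(B)$ in case~(ii), and $|s|\leq\mu(B)$ in case~(iii).

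For case~(i), any $A$ in the class has, after a suitable permutation, the support of a single Hamiltonian cycle $\gamma$; since each row of $|A|$ has only one nonzero entry, that entry must equal the corresponding row sum, so $|A|=B$ exactly. The characteristic polynomial is then $\lambda^n - \pi$ where $\pi=\prod_{(i,j)\in\gamma} a_{ij}$, and its $n$ roots all have modulus $|\pi|^{1/n}=\mu(B)$. Conversely, any $s$ with $|s|=\mu(B)$ is realised by choosing the arguments of the $a_{ij}$ so that $\pi=s^n$.

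For cases~(ii) and~(iii), the construction of $A$ with prescribed eigenvalue $s$ splits by $|s|$. If $|s|$ lies in $(\nu(B),\mu(B))$, or in $(\nu(B),\mu(B)]$ in case~(iii), the plan is to start from Corollary~\ref{c:mainres} to obtain a nonnegative $A_0$ in the class with $\rho(A_0)=|s|$, and then modify the arguments of selected entries so as to rotate the Perron eigenvalue to $s$ while preserving $\Aux(|A|)=B$. If instead $|s|\leq\nu(B)$, the multicyclic hypothesis is crucial: by Lemma~\ref{l:sunfl-exist} one picks two distinct cycles $\alpha,\beta$ of $\digr(B)$ and builds $A$ supported essentially on $\alpha\cup\beta$ together with a spanning tree, distributing the row-sum mass at nodes shared by $\alpha$ and $\beta$ between their outgoing $\alpha$- and $\beta$-edges. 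The characteristic polynomial expands (via a Coates-type cycle formula) as a polynomial in $\lambda$ whose coefficients are built from the cycle products $\pi_\alpha,\pi_\beta$; their arguments are free and their moduli are tuned by the mass distribution, so $s$ can be forced to be a root.

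The main obstacle is realising $s$ with $0<|s|\leq\nu(B)$ in case~(ii), which no nonnegative $A$ in the class can achieve as a spectral radius (Theorem~\ref{t:aevdd} forces $\rho(|A|)\geq\nu(B)$ in the irreducible case). What rescues the argument is that in the complex setting the subdominant eigenvalues are unconstrained from below, and the multicyclic hypothesis provides enough distinct cycle products that their contributions to the characteristic polynomial can interfere destructively; checking quantitatively that every target modulus in $(0,\nu(B)]$ is hit, and that every argument is reachable by phase choices subject to the row-sum constraints, is the delicate part. The boundary modulus $|s|=\mu(B)$ in case~(iii), by contrast, is immediate from Corollary~\ref{c:mainres}, which under $\nu(B)=\mu(B)$ already delivers a nonnegative $A$ of spectral radius exactly $\mu(B)$.
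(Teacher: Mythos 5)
Your easy inclusion (via Frobenius's $|\lambda|\le\rho(|A|)$ together with Theorem~\ref{t:aevdd}), your case (i), and your treatment of $|s|\in(\nu(B),\mu(B)]$ (realize $|s|$ as the Perron root of a nonnegative matrix in the class, then multiply the whole matrix by $e^{i\theta}$) all track the paper's argument. The gap is in the case $0<|s|\le\nu(B)$, which is the crux, and which you explicitly concede you have not verified.

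Two concrete problems with your sketch. First, you cannot build $A$ ``supported essentially on $\alpha\cup\beta$ together with a spanning tree'': the constraint $\Aux(|A|)=B$ forces $\digr(|A|)=\digr(B)$ exactly, and in each row all of $B$'s nonzero positions must be populated, with the moduli summing to the prescribed row sum. You have no freedom to suppress off-sunflower edges without violating either the support condition or the row-sum condition, so the characteristic polynomial is not governed by the two chosen cycle products alone — every cycle cover of $\digr(B)$ contributes. Second, the ``destructive interference via phase choices'' claim is precisely what needs proof; asserting it as an expected consequence of multicyclicity is not an argument, and you have not shown that the moduli of the cycle products can be tuned subject to the prescribed row sums so that a given $s$ with $|s|\le\nu(B)$ is a root.

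The paper sidesteps all of this with a local, explicit construction. Given $\lambda=|s|\le\nu(B)$, pick a row $t$ lying on a cycle and having at least two nonzero entries, and scale that row by a factor $z\le 1$ to produce a new row uniform matrix $H$ with $\nu(H)\le\lambda\le\mu(H)$; by Theorem~\ref{t:mainres} there is a nonnegative $E$ with $\Aux(E)=H$ and a positive eigenvector $v$ with $Ev=\lambda v$. Now modify only the two entries $e_{tk},e_{tl}$ of row $t$, replacing them by $e_{tk}-\Im\,x/v_k$ and $e_{tl}+\Im\,x/v_l$. The imaginary contributions cancel in $(Cv)_t$, so $Cv=\lambda v$ is preserved, while the moduli $\sqrt{e_{tk}^2+(x/v_k)^2}$ and $\sqrt{e_{tl}^2+(x/v_l)^2}$ grow with $x$; choosing $x$ so that row $t$'s total modulus sum returns to the prescribed value gives $\Aux(|C|)=B$ with $\lambda\in\sigma(B)$. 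This is where the hypothesis that $B$ is multicyclic actually enters: it guarantees the existence of a suitable row $t$ with two nonzero entries on which to perform the imaginary perturbation. Your proposal, as written, leaves this central step unproved.
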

\begin{proof}
(i): In this case, all complex matrices $A$ satisfying $\Aux(|A|)=B$ are formed 
by multiplying 
the entries of $B$ (that is, the entries of its only cycle) by some complex numbers of modulus $1$.
The claim follows.

(ii), (iii):
We first show that $\sigma(B)$ is contained in the above mentioned intervals. For that we first recall
a known result of Frobenius (see e.g.~\cite{BP}, p.31, Theorem 2.14) that for any square complex matrix $A$, we have
\begin{equation}
\label{e:Frob}
\rho(A):=\max\{|\lambda|> 0 \colon \det(A-\lambda I)=0\}\leq\rho(|A|).
\end{equation}
As $\Aux(|A|)=B$, Theorem~\ref{t:mainres} implies that $\rho(|A|)\leq \mu(B)$ if $\mu(B)\in\eta(B)$
and $\rho(|A|)<\mu(B)$ if $\mu(B)\notin\eta(B)$. Combining these inequalities with~\eqref{e:Frob}, we
have the desired inclusion.

We are left to show that each number in the intervals can be realized as an eigenvalue of a 
complex matrix $A$ with $\Aux(|A|)=B$.
Select $\lambda\in(0,\mu(B))$ if $\mu(B)\notin\eta(B)$ or $\lambda\in(0,\mu(B)]$ if
$\mu(B)\in\eta(B)$. 

If $\lambda\in\eta(B)$ where $\eta(B)=\{\mu(B)\}$ if $\nu(B)=\mu(B)$ or $\eta(B)$
an interval whose interior is $(\nu(B), \mu(B))$, then there is 
an irreducible  nonnegative matrix $E$ such that $\Aux(E)= B$ with $\lambda=\rho(A)$. 

In the remaining case $\lambda\leq \nu(B)$ we will construct a row uniform matrix $H$ so that 
$\nu(H)\leq\lambda\leq \mu(H)$. 
Since $B$ has at least two cycles 
and it is irreducible, there exists a row with index belonging to one of those
cycles and with at least two nonzero elements one of which must be on that cycle. 
Let $t$ be the index of such row. Consider a cycle $\alpha$
going through that row, with cycle mean $c$ and length $\ell$. If we have
$c\leq\lambda$, it follows that $\nu(B)\leq\lambda\leq \mu(B)$
and we select $H=B$.  If $c>\lambda$
then we multiply all entries of row $t$ by $z$ such that
$c\cdot z^{1/\ell}=\lambda$.  Let $H$ be the resulting matrix, so we have $0<\nu(H)\leq\lambda\leq \mu(H)$.

If $\nu(H)<\mu(H)\leq \nu(B)<\mu(B)$ and $\lambda=\mu(H)$ then $\mu(H)$ is the 
new mean value of the cycle $\alpha$, which previously had $c>\lambda$.
In this case, the corresponding factor $z<1$ can be slightly increased so that 
$\nu(H)<\lambda<\mu(H)$ is satisfied.
If $\nu(H)<\mu(H)$ and $\lambda=\nu(H)$, then multiplying the row $t$  
by a value $1-\epsilon$ for small enough $\epsilon$ we can also ensure that
$\nu(H)<\lambda<\mu(H)$. 

Thus we can assume that $\nu(H)=\lambda=\mu(H)$ or 
$\nu(H)<\lambda<\mu(H)$, where $H$ is obtained from $B$ by multiplying the row $t$ with at least two 
nonzero entries by a nonnegative scalar $z\leq 1$.
Then by Theorem~\ref{t:mainres}, there is a
nonnegative matrix $E$ with an eigenvector $v$ such that $Ev=\lambda v$ 
and $\Aux(E)= H$, where row $t$ has at least two nonzero entries that we denote by 
$e_{tk}$ and $e_{tl}$. Since $E$ is irreducible, all components of $v$ are positive.
We now modify row $t$ of $E$ to form a matrix $C$ such that 
$\Aux(C)= B$ and $Cv=\lambda v$. Let $x$ be such that 
\begin{equation}
\sum_{s\neq k,l}e_{ts}+\sqrt{e^2_{tk}+(x/v_k)^2} +\sqrt{e^2_{tl}+(x/v_{l})^2}=b_{tk}. 
\end{equation}
It can be observed that this equation can be explicitly resolved with respect to $x$.

 \begin{math}
	 c_{rs}=
	 \begin{cases}
  e_{tk}-\Im (x/v_{k}), & \text{if $r=t,\,s=k$;}\\
  e_{tl}+ \Im (x/v_l), & \text{if $r=t,\,s=l$;}\\
e_{rs}, & \text{otherwise}.
  	\end{cases}
\end{math}

Then $\Aux(|C|)=B$ and $Cv=\lambda v$, so $\lambda$ is an eigenvalue of $C$.
The claim follows.


\end{proof}

We call a class of complex matrices {\em regular} if all matrices in the class are nonsingular.

\begin{corollary}
\label{c:crhorange-irred1}
Let $B$ be an irreducible row uniform nonnegative multicyclic matrix with all diagonal elements equal to $0$. Let $\Gamma(B)$ consist of all complex matrices $I-A$ with $\Aux(|A|) = B$. 
\begin{itemize}
\item[{\rm (i)}] 
If  $\mu(B) < 1$ then  $\Gamma(B)$ contains only regular matrices.
\item[{\rm (ii)}] 
 If $\mu(B) = 1$ then  $\Gamma(B)$ contains only regular matrices\\ if and only if $\nu(B) < 1$.
\item[{\rm (iii}] 
If  $\mu(B) > 1$ then  $\Gamma(B)$ contains a singular matrix.
\end{itemize}
\end{corollary}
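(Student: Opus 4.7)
The plan is to reduce the regularity question for the class $\Gamma(B)$ to membership of $1$ in $\sigma(B)$, and then read off the three cases directly from Theorem~\ref{t:crhorange-irred}. A matrix $I - A$ is singular if and only if $1$ is an eigenvalue of $A$, so $\Gamma(B)$ contains a singular matrix if and only if $1 \in \sigma(B)$. The hypothesis that every diagonal entry of $B$ vanishes forces $a_{ii} = 0$ for every $A$ with $\Aux(|A|) = B$, but this plays no role beyond ensuring the statement is well-posed; the equivalence above is simply the definition of $\sigma(B)$ evaluated at $\lambda = 1$.

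First I would dispose of case (i). If $\mu(B) < 1$, then by Theorem~\ref{t:crhorange-irred} (parts (ii) or (iii), depending on whether $\nu(B) < \mu(B)$ or $\nu(B) = \mu(B)$) every element of $\sigma(B) \setminus \{0\}$ has modulus at most $\mu(B) < 1$, so $1 \notin \sigma(B)$ and every matrix in $\Gamma(B)$ is nonsingular.

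Next I would handle case (iii). If $\mu(B) > 1$, then both part (ii) and part (iii) of Theorem~\ref{t:crhorange-irred} guarantee $\{s : 0 < |s| < \mu(B)\} \subseteq \sigma(B)$, which contains $1$. Hence there exists $A$ with $\Aux(|A|) = B$ having $1$ as an eigenvalue, so $I - A \in \Gamma(B)$ is singular.

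Finally, for case (ii) with $\mu(B) = 1$, I would split according to the value of $\nu(B)$. If $\nu(B) < 1 = \mu(B)$, Theorem~\ref{t:crhorange-irred}(ii) gives $\sigma(B) \setminus \{0\} = \{s : 0 < |s| < 1\}$, which excludes $1$, so $\Gamma(B)$ is regular. If $\nu(B) = \mu(B) = 1$, Theorem~\ref{t:crhorange-irred}(iii) gives $\sigma(B) \setminus \{0\} = \{s : 0 < |s| \leq 1\}$, which includes $1$, so $\Gamma(B)$ contains a singular matrix. This yields the ``if and only if'' of (ii). There is no real obstacle here; the entire corollary is a dictionary translation of Theorem~\ref{t:crhorange-irred} under the substitution $\lambda = 1$, and the only item worth flagging explicitly in the write-up is why the zero-diagonal hypothesis lets us identify singularity of $I - A$ with $1 \in \sigma(B)$ without any further side conditions.
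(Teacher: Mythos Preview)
Your proposal is correct and follows exactly the paper's approach: reduce the question of whether $\Gamma(B)$ contains a singular matrix to whether $1\in\sigma(B)$, then invoke Theorem~\ref{t:crhorange-irred} case by case. The paper compresses this into two sentences, but the logic is identical; one small aside---the equivalence ``$I-A$ singular $\Leftrightarrow 1\in\sigma(B)$'' holds straight from the definitions and does not actually depend on the zero-diagonal hypothesis, so you need not flag that point.
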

\begin{proof}
$\Gamma(B)$ contains a singular matrix if and only if $1 \in \sigma(B)$. By Theorem~\ref{t:crhorange-irred}
this happens if and only if either $\mu(B)>1$ or $\mu(B)=1=\nu(B)$. 
This establishes all the claims.
\end{proof}

\begin{remark} {\rm
As noted in the abstract and introduction of \cite{BBCH},  
the theorems in that paper imply Brualdi's \cite{B} conditions for 
the non-singularlty of matrices and show that they are sharp. 
There is no essential difference or simplification in assuming that the 
main diagonal of the matrices considered there is the identity, and in that case the 
spectral content of  \cite{BBCH} Theorems 1.1  -- 1.4 is recaptured by Corollary \ref{c:crhorange-irred1} via standard 
Ger\v{s}gorin theory, e.g.\cite{T}.  More precisely, Corollary \ref{c:crhorange-irred1}(i) corresponds to Theorem 1.1 of \cite{BBCH},  \ref{c:crhorange-irred1}(ii)  
corresponds to Theorems 1.2 and 1.3, and \ref{c:crhorange-irred1}(iii) corresponds to Theorem 1.4.
}
\end{remark}

For $A\in\Rpnn$, index set $K$ and row uniform matrix $B$ we write
$\Aux(A)\lneq^K B$ when the following conditions hold.
\begin{itemize}
\item[(a)] For $\Aux(A)=\Tilde{B}=(\tilde{b}_{ij})$ we have 
$\tilde{b}_{ij}=0\Leftrightarrow b_{ij}=0$ for all $i,j$.  
\item[(b)] For all $i\in K$ we have $\tilde{b}_{ij}<b_{ij}$ for all $j$ where $b_{ij}>0$. 
\item[(c)] For all $i\notin K$ and all $j$ we have $\tilde{b}_{ij}=b_{ij}$.
\end{itemize}

We will also need the following variation of Definition~\ref{def:sigma}.

\begin{definition}
\label{def:tildesigma}
For $B$ a row uniform matrix,  let $\tilde{\sigma}_K(B)$ denote the set 
$\tilde{\sigma}_K(B)=\left\{\lambda \colon\exists A\in\C^{n\times n},\ \Aux(|A|)\lneq^K B,\ \det(A-\lambda I)=0 \right\}$.
\end{definition}

The following corollary of Theorem~\ref{t:crhorange-irred} is immediate.

\begin{corollary}
\label{c:crhorange-irred}
Let $B$ be a row uniform nonnegative irreducible matrix. Then for any non-empty index set $K$,\\
$\tilde{\sigma}_K(B)\backslash\{0\}= \{ s\colon 0<|s|<\mu(B)\}$.
\end{corollary}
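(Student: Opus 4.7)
The plan is to reduce Corollary~\ref{c:crhorange-irred} to Theorem~\ref{t:crhorange-irred} by analyzing $\tilde{B}:=\Aux(|A|)$ for any admissible $A$. By condition (a) in the definition of $\lneq^K$, the matrix $\tilde{B}$ has the same zero pattern (hence the same weighted digraph support) as $B$; in particular, $\tilde{B}$ is irreducible and is unicyclic iff $B$ is. The key quantitative observation, which I shall use repeatedly, is that for every cycle $C$ of $\digr(B)$ the geometric mean in $\tilde{B}$ is strictly smaller than that in $B$ if $C$ passes through some node of $K$, and is unchanged otherwise. Since $K$ is nonempty and $B$ is irreducible, every $i\in K$ lies on at least one cycle, so at least one cycle has strictly smaller mean in $\tilde{B}$ than in $B$.

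For the inclusion $\tilde{\sigma}_K(B)\setminus\{0\}\subseteq\{s:0<|s|<\mu(B)\}$, I would fix a nonzero $\lambda\in\tilde{\sigma}_K(B)$ with witness $A$ and split into cases according to $\mu(\tilde{B})$. By the Frobenius inequality~\eqref{e:Frob} and Theorem~\ref{t:aevdd}, $|\lambda|\leq\rho(|A|)\leq\mu(\tilde{B})$. If $\mu(\tilde{B})<\mu(B)$ we are done. If $\mu(\tilde{B})=\mu(B)$, then $B$ cannot be unicyclic (the unique Hamiltonian cycle passes through $K$ and so has strictly reduced mean in $\tilde{B}$), so $\tilde{B}$ is multicyclic; moreover some cycle through a node of $K$ exists and has mean strictly less than $\mu(B)$ in $\tilde{B}$, so $\nu(\tilde{B})<\mu(\tilde{B})$. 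Theorem~\ref{t:crhorange-irred}(ii) then forces $|\lambda|<\mu(\tilde{B})=\mu(B)$.

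For the reverse inclusion, pick $s$ with $0<|s|<\mu(B)$ and construct $\tilde{B}\lneq^K B$ realising $s$ as an eigenvalue via Theorem~\ref{t:crhorange-irred}. If $B$ is multicyclic, I rescale every row indexed by $K$ by a common factor $1-\epsilon$ with $\epsilon>0$ chosen small enough that $\mu(\tilde{B})>|s|$; the resulting $\tilde{B}$ is irreducible multicyclic, so part (ii) or (iii) of Theorem~\ref{t:crhorange-irred} produces $A$ with $\Aux(|A|)=\tilde{B}\lneq^K B$ and eigenvalue $s$. If $B$ is unicyclic with Hamiltonian cycle of length $n$, I rescale each row indexed by $K$ by the single factor $c=(|s|/\mu(B))^{n/|K|}\in(0,1)$, so that the cycle mean of $\tilde{B}$ is exactly $|s|$, and then Theorem~\ref{t:crhorange-irred}(i) provides an $A$ with $\Aux(|A|)=\tilde{B}\lneq^K B$ having $s$ as an eigenvalue.

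The only real subtlety is the case $\mu(\tilde{B})=\mu(B)$ in the upper-bound direction: one must rule out that $\tilde{B}$ could be of type (i) or (iii) of Theorem~\ref{t:crhorange-irred} where eigenvalues on the circle $|s|=\mu(\tilde{B})$ are allowed. The argument above does this by exploiting $K\neq\emptyset$ together with irreducibility of $B$ to exhibit a cycle in $\tilde{B}$ with mean strictly below $\mu(B)$, thereby forcing $\nu(\tilde{B})<\mu(\tilde{B})$.
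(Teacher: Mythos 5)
Your proposal is correct and follows essentially the same route as the paper: both arguments reduce to Theorem~\ref{t:crhorange-irred} by letting $\tilde B = \Aux(|A|)$ range over all row-uniform matrices with $\tilde B \lneq^K B$, and both exploit the key observation that every cycle through a node of $K$ has strictly smaller geometric mean in $\tilde B$ than in $B$ (so $\nu(\tilde B)<\mu(\tilde B)$ whenever $\mu(\tilde B)=\mu(B)$, ruling out the boundary eigenvalues of cases (i) and (iii) of that theorem). The paper organizes the case split around whether $\nu(B)=\mu(B)$ and whether some cycle avoids $K$, whereas you split on unicyclic/multicyclic and on whether $\mu(\tilde B)=\mu(B)$; this is only a cosmetic reorganization of the same argument, and your version is in fact somewhat more explicit than the paper's terse sketch.
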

\begin{proof} Let us analyze the following three cases.

Case 1: $\mu(B)>\nu(B)$. There exists an $A$ such that 
 $\mu(\Aux(|A|)$ is arbitrarily close to $\mu(B)$ and
$\nu(\Aux(|A|)<\mu(\Aux(|A|) $, and
for each $A$ with  $\Aux(|A|)\lneq^K B$
we have $\nu(\Aux(|A|)<\mu(B)$.

Case 2: $\mu(B)=\nu(B)$ and each cycle contains an index from $K$. In this case 
$\mu(\Aux(|A|)$, where $\Aux(|A|)\lneq^K B$, 
assumes all values in $(0,\mu(B))$.

Case 3: $\mu(B)=\nu(B)$ and there is a cycle avoiding
the nodes with indices in $K$. In this case $\mu(\Aux(|A|))=\mu(B)$ for all $A$ with
$\Aux(A)\lneq^K B$, but $\nu(\Aux(|A|))<\mu(\Aux(|A|))$ for all such matrices. 

In all three cases we obtain the claim by applying Theorem~\ref{t:crhorange-irred}
to all $\Aux(|A|)$ satisfying $\Aux(|A|)\lneq^K B$.
\end{proof}

We are now ready to deal with the general reducible case.

\begin{theorem}
\label{t:crhorange-red}
Let $B$ be a row uniform nonnegative matrix, and let 
\begin{equation}
\begin{split}
& \Tilde{M}(B):= \max\{\mu(B_i)\ \text{where}\\
& B_i\ \text{is a transient class or a final multicyclic class of}\ B\}.
\end{split}
\end{equation}
Then
\begin{itemize}
\item[(i)] If $\Tilde{M}(B)$ is attained at some 
final multicyclic class $B_s$ with $\nu(B_s)=\mu(B_s)$ then
\begin{equation}
\begin{split}
&\sigma(B)\backslash\{0\}=\{s\colon 0<|s|\leq\Tilde{M}(B)\}\cup
 \\
& \cup_i \{s\colon |s|=\mu(B_i),\ 
\text{$B_i$ is a final unicyclic class and 
$\mu(B_i)>\Tilde{M}(B)$} \}.
\end{split}
\end{equation}

\item[(ii)] Otherwise,
\begin{equation}
\begin{split}
&\sigma(B)\backslash\{0\}=\{s\colon 0<|s|<\Tilde{M}(B)\}\cup
\\
&\cup_i \{s\colon |s|=\mu(B_i),\  
\text{$B_i$ is a final unicyclic class and $\mu(B_i)\geq\Tilde{M}(B)$}\}.
\end{split}
\end{equation}

\end{itemize}
\end{theorem}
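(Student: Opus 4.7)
The plan is to reduce to the irreducible case via the Frobenius normal form. Put $A$ in FNF with diagonal blocks $A_1,\ldots,A_m$ corresponding to the classes $B_1,\ldots,B_m$ of $B$. Since the matrix is block lower-triangular, the spectrum of $A$ is the union of the spectra of $A_1,\ldots,A_m$, and the constraint $\Aux(|A|)=B$ restricts each $A_i$ to a problem handled by one of the irreducible-case results of Section~\ref{ss:complex}. Writing $\Sigma_i$ for the set of nonzero eigenvalues achievable by $A_i$ under these constraints, one has $\sigma(B)\setminus\{0\}=\bigcup_i\Sigma_i$, so the theorem reduces to computing each $\Sigma_i$ and simplifying the union.

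First, I would classify each $B_i$. If $B_i$ is trivial, then $\Sigma_i\subseteq\{0\}$ and contributes nothing. If $B_i$ is final, no entries of $A$ in the rows of $N_i$ leave the class, hence $\Aux(|A_i|)=B_i$; Theorem~\ref{t:crhorange-irred} then yields $\Sigma_i=\{s:|s|=\mu(B_i)\}$ when $B_i$ is unicyclic, $\Sigma_i=\{s:0<|s|<\mu(B_i)\}$ when $B_i$ is multicyclic with $\nu(B_i)<\mu(B_i)$, and $\Sigma_i=\{s:0<|s|\leq\mu(B_i)\}$ when $B_i$ is multicyclic with $\nu(B_i)=\mu(B_i)$. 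If $B_i$ is transient, then at least one row of $N_i$ has an edge leaving the class, so $\Aux(|A_i|)\lneq^K B_i$ for a non-empty $K$, and Corollary~\ref{c:crhorange-irred} gives $\Sigma_i=\{s:0<|s|<\mu(B_i)\}$ irrespective of uni- versus multicyclic.

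Next I would take the union. The transient classes and the final multicyclic classes are exactly those contributing punctured disks, and the supremum of their radii is $\Tilde{M}(B)$: in case~(i) one contribution is the closed punctured disk of radius $\Tilde{M}(B)$ (coming from $B_s$ with $\nu(B_s)=\mu(B_s)=\Tilde{M}(B)$), so the disk part of the union equals $\{s:0<|s|\leq\Tilde{M}(B)\}$; in case~(ii) every contribution attaining $\Tilde{M}(B)$ is open, so the disk part equals $\{s:0<|s|<\Tilde{M}(B)\}$. A circle $\{|s|=\mu(B_i)\}$ from a final unicyclic class is absorbed in this disk unless $\mu(B_i)>\Tilde{M}(B)$ (case~(i)) or $\mu(B_i)\geq\Tilde{M}(B)$ (case~(ii)); the surviving circles are precisely those listed in the theorem.

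For the reverse inclusion, given $\lambda$ in the asserted set I would pick a class $B_i$ whose $\Sigma_i$ contains $\lambda$, build $A_i$ realising $\lambda$ via the same references, choose the remaining diagonal blocks freely with the prescribed zero pattern, and fill in each lower off-diagonal block with complex entries whose moduli absorb, in every outgoing row, the deficit between the row sum of the chosen diagonal block and the prescribed value $b_{ij}$; since this deficit is positive and the prescribed nonzero positions outside the class are non-empty in each such row, the distribution is routine. The main obstacle will be the boundary of case~(i): the value $|\lambda|=\Tilde{M}(B)$ must be routed to the specific $B_s$ with $\nu(B_s)=\mu(B_s)=\Tilde{M}(B)$, because the other disk contributors (transient classes and final multicyclic classes with $\nu<\mu$) realise only open disks and hence cannot attain their own boundary; once this routing is made correctly, everything else reduces to the already-proven irreducible characterisations and the block-triangular spectrum decomposition.
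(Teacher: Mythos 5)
Your proposal follows the paper's proof essentially verbatim: block-triangular decomposition of the spectrum via the Frobenius normal form, applying Theorem~\ref{t:crhorange-irred} with $\Aux(|A_i|)=B_i$ to final classes and Corollary~\ref{c:crhorange-irred} with $\Aux(|A_i|)\lneq^{K_i} B_i$ to transient classes, using the independence of rows across classes to combine the per-class spectra by a free union, and then simplifying that union into the two cases of the statement. The paper states the combination step more briefly (``the entries in different rows\ldots vary independently\ldots\ therefore $\sigma(B)\backslash\{0\}$ can be found as union\ldots''), whereas you spell out the reverse inclusion and the routing of the boundary value to $B_s$ in case~(i), but the logic and the cited ingredients are the same.
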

\begin{proof}
It is known that $\lambda$ is an eigenvalue of a matrix $A\in\C^{n\times n}$ if and only if $\det(A-\lambda I)=0$, which
implies that the spectrum of $A\in\C^{n\times n}$ (i.e., the set of eigenvalues of $A$) 
is the union of spectra of its nontrivial classes in the Frobenius normal form.
 Furthermore, if a principal submatrix $A_s$ corresponds to a 
transient 
class then it can be any matrix satisfying $\Aux(|A_s|)\lneq^{K_s} B_s$,
where $K_s$ is the (non-empty) set of indices of all
nodes in this transient class that have a connection to another class.
Observe that the entries in different rows of matrices with the same $\Aux(|A|)$ vary 
independently and hence the same is true about the sets of rows belonging
to different classes.
Therefore 
$\sigma(B)\backslash\{0\}$ can be found as union of $\sigma(B_i)\backslash\{0\}$ over all final classes $B_i$
and $\Tilde{\sigma}_{K_s}(B_s)\backslash\{0\}$ over 
all transient classes $B_s$, for some non-empty index sets $K_s$.
Using Theorem~\ref{t:crhorange-irred} and Corollary~\ref{c:crhorange-irred} and taking the above mentioned union, it can be verified that 
$\sigma(B)\backslash\{0\}$ is as claimed.
\end{proof}

{\bf Example.} 
To illustrate the last theorem, let us consider the following row uniform matrices:
\begin{equation*}
B=
\begin{pmatrix}
5 & 0 & 0 & 0 & 0\\
4 & 0 & 4 & 0 & 0\\
0 & 4 & 0 & 0 & 0\\
3 & 0 & 0 & 3 & 3\\
0 & 3 & 0 & 3 & 3
\end{pmatrix},\quad 
C=
\begin{pmatrix}
5 & 0 & 0 & 0 & 0\\
0 & 0 & 4 & 0 & 0\\
0 & 4 & 0 & 0 & 0\\
0 & 0 & 0 & 3 & 3\\
0 & 0 & 0 & 3 & 3
\end{pmatrix}
\end{equation*}
That is, $C$ is formed from $B$ by cutting all connections between the classes. 

The moduli of the eigenvalues in $\sigma(B)$ assume all the values in 
$(0,4)\cup\{5\}$. 
Note that $\tilde{M}(B) =\max\{3,4\}$, but $4\notin\sigma(B)$ 
because the class extracted from rows and columns $2$ and $3$ is 
transient ($b_{12}>0$). Therefore  
condition (ii) of the Theorem~\ref{t:crhorange-red} is used in 
computing $\sigma(B)$.

\if{
The moduli of eigenvalues in $\sigma(B)$ assume all values in $(0,4)\cup\{5\}$
Note that $\Tilde{M}(B)=\max(3,4)=4$ and that $4$ does not
belong to the set: although the second class of $B$ (the block extracted from 
rows and columns $2$ and $3$) is unicyclic, it is also transient. Thus its maximum
cycle mean takes part in the computation of $\Tilde{M}(B)$ but in turn, it also
prevents $\Tilde{M}(B)$ from being attained.
}\fi

The moduli of eigenvalues in $\sigma(C)$ assume all values in $(0,3]\cup
\{4\}\cup\{5\}$. Here $\Tilde{M}(C)=3$, which is the maximum cycle mean of 
the only final class which is multicyclic. As the means of all cycles in that class are
equal to each other, the value of $\Tilde{M}(C)$ belongs to $\sigma(C)$.

\subsection{Camion-Hoffman theorem}
\label{ss:camhoff}

We now will apply Theorem~\ref{t:perronvis} and 
Theorem~\ref{t:crhorange-red} to provide a new proof for a theorem of Camion and Hoffman~\cite{CH}. 

Let us first recall the following known facts and a definition:

\begin{lemma}[{\cite{CH}}]
\label{l:camhoff}
Let $a_1,\ldots a_n$ be nonnegative numbers such that each number does not exceed the sum of other numbers.
Then there exist complex numbers $c_1,\ldots, c_n$ such that
$|c_i|=a_i$ for $i=1,\ldots,n$ and $c_1+\ldots+c_n=0$.
\end{lemma}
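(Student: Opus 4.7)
The plan is to prove the lemma by induction on $n$. The base cases $n \in \{1,2\}$ are degenerate, since the hypothesis forces $a_1 = 0$ (respectively $a_1 = a_2$), and the required complex numbers are then obvious. The case $n = 3$ is exactly the triangle inequality: three nonnegative reals $a_1, a_2, a_3$ each at most the sum of the other two are the side lengths of a (possibly degenerate) triangle in the complex plane, and the three directed sides of that triangle, read as complex numbers, give the required $c_1, c_2, c_3$.

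For the inductive step I would take $n \geq 4$, relabel so that $a_1 \geq a_2 \geq \cdots \geq a_n$, and ``fuse'' the two smallest moduli $a_{n-1}$ and $a_n$ into a single nonnegative real $c$ chosen so that two sets of constraints hold simultaneously: the polygon inequalities for the $(n-1)$-tuple $(a_1, \ldots, a_{n-2}, c)$, and the triangle inequality $|a_{n-1}-a_n| \leq c \leq a_{n-1}+a_n$ on the triple $(c, a_{n-1}, a_n)$. Given such a $c$, the induction hypothesis applied to the $(n-1)$-tuple yields complex numbers $c_1, \ldots, c_{n-2}, c'$ with $|c_i| = a_i$, $|c'| = c$, and $c_1 + \cdots + c_{n-2} + c' = 0$. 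The triangle inequality on $(c, a_{n-1}, a_n)$ then lets one split $-c' = c_{n-1} + c_n$ with $|c_{n-1}| = a_{n-1}$ and $|c_n| = a_n$, completing the required sum.

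The main obstacle, and really the only point requiring care, is showing that the admissible interval for $c$ is nonempty. Because $a_1 \geq \cdots \geq a_{n-2}$, the binding polygon inequality for $(a_1,\ldots,a_{n-2},c)$ is the one at $i=1$, giving the lower bound $c \geq a_1 - a_2 - \cdots - a_{n-2}$ together with the upper bound $c \leq a_1 + \cdots + a_{n-2}$. Combining with the triangle constraint, the interval I must show nonempty is
\begin{equation*}
\bigl[\,\max(|a_{n-1}-a_n|,\ a_1 - a_2 - \cdots - a_{n-2}),\ \min(a_{n-1}+a_n,\ a_1 + \cdots + a_{n-2})\,\bigr].
\end{equation*}
Of the four pairwise comparisons needed, two are immediate, one ($a_1 - a_2 - \cdots - a_{n-2} \leq a_{n-1}+a_n$) is precisely the original polygon hypothesis on $a_1$, and the last ($|a_{n-1}-a_n| \leq a_1 + \cdots + a_{n-2}$) follows from $n \geq 4$, which forces $a_{n-1} \leq a_1 \leq a_1 + \cdots + a_{n-2}$. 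Hence a valid $c$ exists and the induction closes.
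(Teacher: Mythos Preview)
The paper does not give a proof of this lemma; it is stated with a bare citation to Camion--Hoffman~\cite{CH} and then used as a black box in Corollary~\ref{c:camhoff}. So there is no ``paper's own proof'' to compare against.

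Your inductive argument is correct and is in fact the standard proof of this polygon-closing fact. The analysis of the admissible interval for $c$ is right: the four cross-inequalities you list are exactly what is needed, and each is verified as you say. There is one slip of sign in the final sentence of the inductive step: you want to split $c' = c_{n-1} + c_n$, not $-c' = c_{n-1} + c_n$, so that
\[
c_1 + \cdots + c_n \;=\; (c_1 + \cdots + c_{n-2}) + (c_{n-1} + c_n) \;=\; -c' + c' \;=\; 0.
\]
This is harmless, since $|{-c'}| = |c'| = c$ and the triangle constraint on $(c, a_{n-1}, a_n)$ lets you realize either of $\pm c'$ as such a sum; the argument goes through unchanged once the sign is fixed.
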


\begin{corollary}[{\cite{CH}}]
\label{c:camhoff}
Let the entries of $A=(a_{ij})\in\Rpnn$ satisfy $a_{ii}=1$, $\sum_{j\neq i} a_{ij}\geq 1$ for all $i$
and $a_{ij}\leq 1$ for all $i,j$.
Then there exists a complex matrix $C$ with $|C|=A$ and $\det(C)=0$.
\end{corollary}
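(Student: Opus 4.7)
The plan is to exhibit a nonzero vector $v$ such that one can choose complex phases on the entries of $A$ yielding a matrix $C$ with $|C|=A$ and $Cv=0$; this forces $\det(C)=0$. The natural choice is $v=\mathbf{1}=(1,\ldots,1)^T$, which reduces the problem to finding, independently in each row $i$, complex numbers $c_{i1},\ldots,c_{in}$ with $|c_{ij}|=a_{ij}$ and $\sum_j c_{ij}=0$. By Lemma~\ref{l:camhoff}, this is possible provided that for every row $i$ the numbers $a_{i1},\ldots,a_{in}$ satisfy the ``triangle'' condition that no single term exceeds the sum of the others.

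The second step is to verify this triangle condition for each row $i$ using the three hypotheses on $A$. For the diagonal entry, one has $a_{ii}=1\leq \sum_{j\neq i}a_{ij}$ directly by assumption. For any off-diagonal entry $a_{ij_0}$ with $j_0\neq i$, one uses $a_{ij_0}\leq 1=a_{ii}$, and since the index $i$ occurs in the sum $\sum_{j\neq j_0}a_{ij}$ (because $i\neq j_0$), the term $a_{ii}=1$ is one of the summands, yielding $a_{ij_0}\leq a_{ii}\leq \sum_{j\neq j_0}a_{ij}$. Thus the hypothesis of Lemma~\ref{l:camhoff} is met for every row.

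Finally, applying Lemma~\ref{l:camhoff} row by row produces complex entries $c_{ij}$ with $|c_{ij}|=a_{ij}$ and $\sum_{j}c_{ij}=0$ for each $i$. Assembling these into a matrix $C=(c_{ij})$ gives $|C|=A$ and $C\mathbf{1}=0$, so $C$ is singular and $\det(C)=0$, as required. There is no real obstacle here: the entire content of the corollary is that the row-sum and boundedness hypotheses on $A$ force the per-row triangle inequality needed to invoke Lemma~\ref{l:camhoff}, after which the construction is immediate and requires neither the scaling result of Section~\ref{ss:perronvis} nor the spectral characterization of Theorem~\ref{t:crhorange-red} (those are reserved for the full Camion--Hoffman theorem in the next subsection).
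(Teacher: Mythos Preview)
Your proof is correct and follows exactly the paper's approach: apply Lemma~\ref{l:camhoff} to each row to obtain a matrix $C$ with $|C|=A$ and all row sums zero, hence $C\mathbf{1}=0$ and $\det(C)=0$. The paper's proof is terser, simply asserting that the hypotheses of Lemma~\ref{l:camhoff} hold in each row, whereas you spell out the easy verification of the triangle condition for diagonal and off-diagonal entries; otherwise the arguments are identical.
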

\begin{proof}
Since the condition of Lemma~\ref{l:camhoff} are satisfied for $a_{i1},\ldots,a_{in}$ 
for all $i$, there exists a complex matrix $C$ with $|C|=A$ such that $\sum_j c_{ij}=0$
for all $i$. 
This implies $\det(C)=0$.
\end{proof}

\begin{definition}
A matrix $A=(a_{ij})\in\Rpnn$ is called strictly diagonally dominant if $a_{ii}>\sum_{j\neq i} a_{ij}$ for 
all $i$.
\end{definition}

We will investigate the following matrix class:

\begin{definition}
 For $A\in\Rpnn$ define $\Omega(A)=\{E:|e_{ij}|=a_{ij}\ 1\leq i,j\leq n\}$.
\end{definition}

\begin{theorem}[Camion-Hoffman]
 For $A\in\Rpnn$ the following are equivalent:
\begin{itemize}
\item[(i)] $\Omega(A)$ does not contain a singular matrix;
\item[(ii)] There exists a permutation matrix $P$ and a diagonal matrix $D$ such that
$PAD$ is strictly diagonally dominant;
\item[(iii)] There exists a 
permutation matrix $P$ and nonsingular diagonal matrices $D_1, D_2$ such that all diagonal entries of
$D_1PAD_2$ are equal to $1$ and $\mu(\Aux(D_1PAD_2-I))<1$. 
\end{itemize}
\end{theorem}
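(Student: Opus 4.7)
I will prove (ii) $\Rightarrow$ (i), (iii) $\Rightarrow$ (ii), and (i) $\Rightarrow$ (iii) in turn, with the last being the substantive direction. The first implication is direct: for any $C \in \Omega(A)$ the matrix $PCD$ has moduli $|PCD| = PAD$ entry-wise (since $P$ is a permutation and $D$ is positive diagonal), hence inherits strict diagonal dominance and is nonsingular by the Levy--Desplanques theorem, so $C$ is nonsingular. For (iii) $\Rightarrow$ (ii), set $M := D_1 P A D_2 - I$, which is nonnegative with zero diagonal and $\mu(\Aux(M)) < 1$; Theorem~\ref{t:mainres}(i) yields $\rho(M) \leq \mu(\Aux(M)) < 1$, whence $(I - M)^{-1}$ is nonnegative and $y := (I - M)^{-1} \mathbf{1} > 0$ satisfies $My < y$. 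This says $D_1 P A D_2 \diag(y)$ is strictly diagonally dominant, and left-multiplying by $D_1^{-1}$ preserves strict diagonal dominance, giving (ii) with $D := D_2 \diag(y)$.

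For (i) $\Rightarrow$ (iii) I argue the contrapositive: assume (iii) fails and construct a singular $C \in \Omega(A)$. If no permutation $\sigma$ satisfies $\prod_i A_{i\sigma(i)} > 0$, then every generalized diagonal product vanishes, $\det A = 0$, and $A$ itself is singular. Otherwise, fix such $\sigma$ and, absorbing the associated permutation and a diagonal normalization into $A$, assume $A_{ii} = 1$ for all $i$; write $A' := A - I$. The strategy is to find a positive vector $y$ such that in every row $i$ the moduli $(A_{i1} y_1, \ldots, A_{in} y_n)$ satisfy the triangle inequality of Lemma~\ref{l:camhoff} (each term at most the sum of the others); Corollary~\ref{c:camhoff} applied row by row then produces complex numbers with those moduli summing to zero, which assemble into a matrix $C$ with $|C| = A$ and $Cy = 0$, hence singular and in $\Omega(A)$.

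Producing such $y$ is the crux of the argument. The failure of (iii) for the chosen $\sigma$ translates, via Theorem~\ref{t:perronvis} and Theorem~\ref{t:mainres}(i), to $\rho(A') \geq 1$; indeed $\inf_D \mu(\Aux(D^{-1} A' D))$ equals $\rho(A')$ in the irreducible case (attained when $D$ is the Perron eigenvector), and the hypothesis bounds this infimum below by $1$. When additionally $\mu(A') \leq 1$, so that $1 \in [\mu(A'), \rho(A')] = \alpha(A')$, Theorem~\ref{t:perronvis} with $a = 1$ yields a $y$ for which $\diag(y)^{-1} A' \diag(y)$ has entries bounded by $1$ and row sums at least $1$, making the triangle inequalities hold in each row (every entry is $\leq 1$ while the sum of the others in the same row is $\geq 1$); transporting back by $C := \diag(y) C' \diag(y)^{-1}$ delivers the singular element of $\Omega(A)$. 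The case $\mu(A') > 1$ is the main obstacle: sum visualization alone only gives $A_{ij} y_j \leq \rho(A') y_i$, which is weaker than the required triangle bound $A_{ij} y_j \leq (y_i + (A'y)_i)/2$ when $\rho(A') > 1$. This is resolved by invoking the characterization of Theorem~\ref{t:crhorange-red} for $\Aux(D^{-1} A' D)$ together with the structural information that $\neg$(iii) holds for \emph{every} valid permutation (not only the chosen $\sigma$), which jointly allow the selection of a non-uniform $y > 0$ making the triangle inequalities simultaneously feasible in all rows. Reducibility of $A'$ is handled via the Frobenius normal form of Section~\ref{ss:fnf}, reducing block by block to the irreducible case.
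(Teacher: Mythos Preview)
Your implications (ii)$\Rightarrow$(i) and (iii)$\Rightarrow$(ii) are correct and cleanly done. The substantive gap is in (i)$\Rightarrow$(iii), specifically in the case $\mu(A')>1$.

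What you write for that case is not a proof: you name Theorem~\ref{t:crhorange-red} and the ``structural information that $\neg$(iii) holds for every valid permutation'', but you never construct the vector $y$ nor explain how those ingredients combine to produce one. In fact the obstacle is real. If $\mu(A')>1$ then, since $\mu$ is invariant under diagonal similarity, no scaling $D^{-1}A'D$ can have all entries $\le 1$; hence Corollary~\ref{c:camhoff} is inapplicable to the scaled matrix, and the weaker row-wise triangle conditions of Lemma~\ref{l:camhoff} (each entry at most the sum of the others) are not furnished by Theorem~\ref{t:perronvis} or by any other result in the paper. Theorem~\ref{t:crhorange-red} concerns matrices with prescribed $\Aux$, a different class from $\Omega(A)$, and your sketch gives no mechanism to transfer information across.

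The repair is simple and is exactly the device the paper uses in its (i)$\Rightarrow$(ii): do not pick an arbitrary $\sigma$ with nonzero diagonal product, but pick $\sigma$ so that $\prod_i A_{i\sigma(i)}$ is \emph{maximal} among all generalized diagonal products of $A$. After normalizing so that $A_{ii}=1$, every generalized diagonal product of $A$ is $\le 1$; since any cycle product in $A'=A-I$ can be completed by the remaining diagonal $1$'s to a generalized diagonal product of $A$, one gets $\mu(A')\le 1$ automatically. The problematic case is then vacuous, and your use of Theorem~\ref{t:perronvis} at $a=1$ goes through.

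Your handling of reducibility is also too sketchy. ``Reducing block by block'' does not literally produce a single global $y$. What the paper does (and what your argument should do) is note that $\rho(A')\ge 1$ forces some irreducible class $B$ of $A'$ to satisfy $\rho(B)\ge 1$, while $\mu(B)\le\mu(A')\le 1$; apply Theorem~\ref{t:perronvis} to that single class to get a $1$-sum visualization of $B$, use Corollary~\ref{c:camhoff} to make the corresponding diagonal block of $A$ singular, and then observe that block-triangularity of the Frobenius form makes the full determinant vanish.
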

\begin{proof}
(i)$\Rightarrow$ (ii): Assume that $\Omega(A)$ is regular. 
Let $P$ be a permutation matrix such that the 
diagonal product of $PA$ is greater then or equal to any 
generalized diagonal product of $A$. Since $A$ is nonsingular the diagonal elements of $E=PA$ are 
nonzero.  
Let $D$ be the diagonal matrix with entries equal 
to the inverse of the corresponding diagonal elements of $PA$.
Since all 
diagonal entries of $PAD$ are equal to $1$, 
for any cycle $\alpha$ we can find a generalized diagonal product of $PAD$ 
equal to the product of the 
entries of $\alpha$. 
Since any generalized diagonal product of $PAD$ is less
than or equal to $1$, it follows that $\mu(PAD)=1$.  

We will now establish that $\rho(PAD-I)<1$. The proof is by contradiction.
Assume that $\rho(PAD-I)\geq 1$.
Then $PAD-I$ has a class $B$ such that $\rho(B)>1$ and for all $i$ we have $b_{ii}=0$. 
Since $\mu(PAD-I)\leq 1$ we also have $\mu(B)\leq 1$. Applying Theorem~\ref{t:perronvis} 
to $B$, we obtain a diagonal nonnegative matrix $Y$ such that matrix
$E:=Y^{-1}(B+I)Y$ has entries satisfying
$0\leq e_{ij}\leq 1$ and $e_{ii}=1$ for all $i,j,$ and $\sum_{k\neq i} e_{ik}\geq 1$ for all $i$.
By Corollary~\ref{c:camhoff} there is a matrix $H=(h_{ij})$ with complex entries satisfying 
$|H|=E$ and $\det(H)=0$. Replacing the class $B+I$ in $F$ by $YHY^{-1}$ we obtain a matrix
$G$ with $\det(G)=0$ and $|G|\in\Omega(PAD)$. As $P$ is a permutation matrix and $D$ diagonal,
there is a bijective correspondence between $\Omega(PAD)$ and $\Omega(A)$ in which the singularity
and nonsingularity are preserved. This contradicts that $\Omega(A)$ does not 
contain a singular matrix and hence $\rho(PAD-I)<1$.

Since $\rho(PAD-I)<1$, there exists a diagonal matrix 
$Z$ such that $Z^{-1}(PAD-I)Z$ has all row sums strictly less than $1$, 
 see \cite[Chapter 6]{BP} or \cite{S1} for a detailed argument.
(Such a diagonal matrix $Z$ can be constructed using Perron
eigenvectors of nontrivial classes.) As all row sums in the
 matrix $Z^{-1}(PAD-I)Z=Z^{-1}PADZ-I$ 
are strictly less than $1$, it follows that
the matrix $PADZ$ is strictly diagonally dominant, with $P$ a permutation matrix
and $DZ$ a diagonal matrix, as required.

\if{
Thus we have $\rho(PAD-I)<1$, meaning that the Perron roots of all 
nontrivial classes of $PAD-I$ are less than $1$. 
We next argue that there exists a diagonal matrix $Z$ such that $Z^{-1}(PAD-I)Z$ has all row sums
strictly less than $1$.  In fact, the argument is standard and can be skipped, see
for instance~\cite{BP} or \cite{S1}.  

Let $B_i$ for $i=1,\ldots, m$ be the
classes of the (lower triangular) Frobenius normal form of $PAD-I$ (arranged from the north-western corner
to the south-eastern corner of $PAD$), and let $B_{ij}$ for $i\neq j\in\{1,\ldots,m\}$ be
the corresponding off-diagonal blocks of $PAD-I$. If $i<j$ then $B_{ij}=0$.  
  As $\rho(B_i)<1$, for every such block there exists a 
nonsingular (nonnegative) diagonal matrix $Z_i$ such that $Z_i^{-1}B_iZ_i$ has all row sums less than $1$ (using the 
Perron vector for each block).
Here we also include the trivial classes $B_i$ for which such scaling is trivial (take any positive number).
Let $\delta$ be the smallest difference between $1$ and a row sum in one of the $Z_i$.  

Combining $Z_i$ for $i=1,\ldots,m$ we can build a diagonal matrix $Z$ 
 such that all row sums of 
$Z^{-1}(PAD-I)Z$ are less than $1$. Indeed, each diagonal matrix $Z_i$ can be multiplied by any scalar: 
this will not affect the entries of $Z_i^{-1} B_i Z_i$. 
Let $Z$ be the diagonal matrix  made from blocks 
$a_i Z_i$ for $i=1,\ldots,m$ . Here, scalars $a_1,\ldots, a_m$ can be selected in such a way that for
any $i=2,\ldots,m$, the total sum of all entries in the blocks $a_i^{-1}Z_i^{-1} B_{ij}Z_j a_j$ over 
$j=1,\ldots, i-1$ does not 
exceed $\delta$. Omitting the details, let us mention that this can be achieved 
by selecting $a_1,\ldots, a_m$ in such a way that 
$a_1\ll a_2\ll\ldots\ll a_m$. 

Thus there exists a diagonal matrix $Z$ such that all row sums in 
$Z^{-1}(PAD-I)Z=Z^{-1}PADZ-I$ 
are strictly less than $1$.
}\fi

(ii)$\Rightarrow$ (iii)
If $PAD$ is strictly diagonally dominant then there is a digonal matrix $D_1$ such that 
the diagonal entries of $D_1PAD$ are equal to $1$ and the row sums of
$D_1PAD-I$ are strictly less than $1$.
 As each entry in $\Aux(D_1PAD-I)$ is strictly less than $1$, we also have
$\mu(\Aux(D_1PAD-I))<1$ as claimed.

(iii)$\Rightarrow$ (i): The proof is by contradiction. Assume that (iii) holds
but (i) does not hold. That is, assume that  
there exists a permutation matrix 
$P$ and nonsingular diagonal matrices $D_1,D_2$ 
such that $\mu(\Aux(D_1PAD_2-I))<1$, and  that (in contradiction with (i)) there exists 
$C\in\Omega(A)$ with $\det(C)=0$. Then $\mu(\Aux(D_1P|C|D_2-I))=\mu(\Aux(D_1PAD_2-I))<1$, and by Theorem~\ref{t:crhorange-red}
we have  $1\notin\sigma(\Aux(D_1PAD_2-I))$. However, we have $\det(D_1PCD_2)=0$, and we can multiply the
rows of $D_1PCD_2$ by some complex numbers with moduli $1$ to obtain a matrix with zero determinant
and with all diagonal entries equal to $-1$. Adding the identity matrix to this matrix we obtain a 
matrix in the class $\Omega(D_1PAD_2-I)$, for which $1$ is an eigenvalue. 
The set of eigenvalues of matrices in
$\Omega(D_1PAD_2-I)$ is a subset of $\sigma(\Aux(D_1PAD_2-I))$, so 
$1\in \sigma(\Aux(D_1PAD_2-I))$, a contradiction.   
\end{proof}

Let us also reformulate the Camion-Hoffman theorem in terms of $M$-matrices
and comparison matrices.
Recall that a real matrix $B$ is a nonsingular $M$-matrix if $B=\rho I- C$ where
$C$ is a nonnegative matrix and the Perron root of $C$ is strictly less than $\rho$ 
(see~\cite{BP} for many other equivalent definitions). For a nonnegative matrix $A\in\Rpnn$,
its comparison matrix $E=\comp(A)$ has entries $e_{ii}=a_{ii}$ for $i=1,\ldots,n$ and
$e_{ij}=-a_{ij}$ for $i\neq j$.  

\begin{theorem}
For a nonnegative matrix $A$, the following are equivalent:
\begin{itemize}
\item[(i)] $\Omega(A)$ does not contain a singular matrix,
\item[(ii)] For $P$ a permutation matrix corresponding
to the greatest generalized diagonal product of $A$, the matrix $\comp(PA)$ is
a nonsingular $M$-matrix.  
\end{itemize}
\end{theorem}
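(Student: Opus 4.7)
The plan is to deduce this reformulation from the Camion-Hoffman theorem just proved via the standard characterization of nonsingular $M$-matrices: a matrix $B$ with nonpositive off-diagonal entries is a nonsingular $M$-matrix if and only if there exists a positive vector $x$ with $Bx > 0$ componentwise; equivalently, there is a positive diagonal matrix $D = \diag(x)$ making $BD$ strictly row diagonally dominant (see~\cite{BP}). This characterization is the hinge that connects condition (ii) to condition (ii) of the Camion-Hoffman theorem.

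First I would record the sign-pattern observation. If $\Omega(A)$ is regular, then $A$ must possess a nonzero generalized diagonal product, for otherwise every $C\in\Omega(A)$ would satisfy $\det(C)=0$. Hence a permutation $P$ realizing the greatest generalized diagonal product of $A$ makes all diagonal entries of $PA$ strictly positive, so $\comp(PA)$ has positive diagonal and nonpositive off-diagonal entries, i.e., the correct sign pattern to possibly be an $M$-matrix. Conversely, any nonsingular $M$-matrix has positive diagonal, so (ii) already forces $PA$ to have a nonzero diagonal product.

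For the direction (i)$\Rightarrow$(ii), I would invoke the Camion-Hoffman proof using exactly the permutation $P$ from the hypothesis: that proof began with a permutation realizing the greatest generalized diagonal product and produced a positive diagonal matrix (called $DZ$ in the proof) such that $PA \cdot DZ$ is strictly diagonally dominant. Since right multiplication by a positive diagonal matrix preserves sign patterns, $\comp(PA \cdot DZ) = \comp(PA)\cdot DZ$, and strict diagonal dominance of $PA\cdot DZ$ is precisely the statement that $\comp(PA)\,(DZ\mathbf{1}) > 0$. With $x = DZ\mathbf{1} > 0$, the $M$-matrix characterization identifies $\comp(PA)$ as a nonsingular $M$-matrix.

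For (ii)$\Rightarrow$(i), the same characterization gives a positive vector $x$ with $\comp(PA)\,x > 0$; setting $D=\diag(x)$ and unwinding, this says $PAD$ is strictly diagonally dominant, so by the equivalence (i)$\Leftrightarrow$(ii) of the Camion-Hoffman theorem, $\Omega(A)$ contains no singular matrix. I expect the main obstacle to be bookkeeping rather than conceptual: tracking how right diagonal scaling commutes with the $\comp(\cdot)$ operation, and confirming that the specific permutation fixed in the hypothesis is the same one used in the Camion-Hoffman proof. Both reduce to elementary checks once the $M$-matrix characterization is in hand.
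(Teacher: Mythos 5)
The paper states this theorem as a bare reformulation of Camion--Hoffman, with no proof supplied, so there is nothing to compare against; you are actually filling a gap the paper leaves to the reader. Your argument is correct. The hinge you identify is exactly the right one: for a $Z$-matrix $B$ (nonpositive off-diagonal entries), $B$ is a nonsingular $M$-matrix if and only if there is a positive vector $x$ with $Bx>0$, which via $D=\diag(x)$ translates to strict row diagonal dominance of $BD$. Since $\comp(PA\,D)=\comp(PA)\,D$ for positive diagonal $D$ and nonnegative $PA$, the condition ``$\exists$ positive diagonal $D$ with $PAD$ strictly diagonally dominant'' is equivalent to ``$\comp(PA)$ is a nonsingular $M$-matrix.'' You also correctly note that the (i)$\Rightarrow$(ii) branch of the Camion--Hoffman proof in the paper fixes precisely a permutation realizing the greatest generalized diagonal product, so the particular $P$ in the present statement is the one the paper's construction already produces; and that if the greatest generalized diagonal product vanishes, both (i) and (ii) fail trivially (every $C\in\Omega(A)$ has $\det C=0$, and $\comp(PA)$ has a zero diagonal entry so cannot be a nonsingular $M$-matrix). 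The converse direction is likewise sound: an $x>0$ witnessing $\comp(PA)x>0$ yields a $D$ with $PAD$ strictly diagonally dominant, which is an instance of condition (ii) of Camion--Hoffman and hence gives regularity of $\Omega(A)$.
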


\section*{References}

\end{document}